\documentclass{article}
\usepackage{amsmath}
\usepackage{amssymb}
\usepackage{amsfonts}

\setcounter{MaxMatrixCols}{10}

\newtheorem{theorem}{Theorem}[section]

\newtheorem{criterion}{Criterion}[section]
\newtheorem{definition}[theorem]{Definition}
\newtheorem{example}{Example}

\newtheorem{proposition}{Proposition}
[section]

\newenvironment{proof}[1][Proof]{\noindent\textbf{#1.} }{\ \rule{0.5em}{0.5em}}

\input{tcilatex}
\begin{document}

\title{{\Large Automorphic Equivalence in the Classical Varieties of Linear
Algebras.}}
\author{{\Large A.Tsurkov} \\
Institute of Mathematics and Statistics.\\
University S\~{a}o Paulo. \\
Rua do Mat\~{a}o, 1010 \\
Cidade Universit\'{a}ria \\
S\~{a}o Paulo - SP - Brazil - CEP 05508-090 \\
arkady.tsurkov@gmail.com}
\maketitle

\begin{abstract}
This research is a continuation of the \cite{Tsurkov}. In this paper we
consider some classical varieties of linear algebras over the field $k$ such
that $char\left( k\right) =0$. We study the relation between the geometric
equivalence and automorphic equivalence of the algebras of these varieties.

If we denote by $\Theta $ one of these varieties, then $\Theta ^{0}$ is a
category of the finite generated free algebras of the variety $\Theta $. In
this paper we calculate for the considered varieties the quotient group $%
\mathfrak{A/Y}$, where $\mathfrak{A}$ is a group of the all automorphisms of
the category $\Theta ^{0}$ and $\mathfrak{Y}$ is a subgroup of the all inner
automorphisms of this category. The quotient group $\mathfrak{A/Y}$ measures
difference between the geometric equivalence and automorphic equivalence of
algebras from the variety $\Theta $. The results of this paper and of the 
\cite{Tsurkov} are summarized in the table in the end of the Section \ref%
{varieties}.

We can see from this table that in the all considered varieties of the
linear algebras the group $\mathfrak{A/Y}$ is generated by cosets which are
presented by no more than two kinds of the strongly stable automorphisms of
the category $\Theta ^{0}$. One kind of automorphisms is connected to the
changing of the multiplication by scalar and second one is connected to the
changing of the multiplication of the elements of the algebras. In the
Section \ref{examples} we present some examples of the pairs of linear
algebras such that the considered automorphism provides the automorphic
equivalence of these algebras but these algebras are not geometrically
equivalent. These examples are presented for the all considered above
varieties of algebras and for both these kinds of the strongly stable
automorphisms, when they exist in the group $\mathfrak{A/Y}$.
\end{abstract}

\section{Introduction.\label{intro}}

\setcounter{equation}{0}

This is a paper from universal algebraic geometry. All definitions of the
basic notions of the universal algebraic geometry can be found, for example,
in \cite{PlotkinVarCat}, \cite{PlotkinNotions} and \cite{PlotkinSame}.

In universal algebraic geometry we consider some variety $\Theta $ of
one-sorted algebras of the signature $\Omega $. We denote by $X_{0}=\left\{
x_{1},x_{2},\ldots ,x_{n},\ldots \right\} $ a countable set of symbols, and
by $\mathfrak{F}\left( X_{0}\right) $ the set of all finite subsets of $%
X_{0} $. We will consider the category $\Theta ^{0}$, whose objects are all
free algebras $F\left( X\right) $ of the variety $\Theta $ generated by
finite subsets $X\in \mathfrak{F}\left( X_{0}\right) $. Morphisms of the
category $\Theta ^{0}$ are homomorphisms of free algebras.

We denote some time $F\left( X\right) =F\left( x_{1},x_{2},\ldots
,x_{n}\right) $ if $X=\left\{ x_{1},x_{2},\ldots ,x_{n}\right\} $ and even $%
F\left( X\right) =F\left( x\right) $ if $X$ has only one element.

We consider a "set of equations" $T\subset F\times F$, where $F\in \mathrm{Ob%
}\Theta ^{0}$, and we "resolve" these equations in $\mathrm{Hom}\left(
F,H\right) $, where $H\in \Theta $. The set $\mathrm{Hom}\left( F,H\right) $
serves as an "affine space over the algebra $H$". We denote by $%
T_{H}^{\prime }$ the set $\left\{ \mu \in \mathrm{Hom}\left( F,H\right) \mid
T\subset \ker \mu \right\} $. This is the set of all solutions of the set of
equations $T$. For every set of "points" $R\subset \mathrm{Hom}\left(
F,H\right) $ we consider a congruence of equations defined by this set: $%
R_{H}^{\prime }=\bigcap\limits_{\mu \in R}\ker \mu $. For every set of
equations $T$ we consider its algebraic closure $T_{H}^{\prime \prime }$ in
respect to the algebra $H$. A set $T\subset F\times F$ is called $H$-closed
if $T=T_{H}^{\prime \prime }$. An $H$-closed set is always a congruence.

\begin{definition}
Algebras $H_{1},H_{2}\in \Theta $ are \textbf{geometrically equivalent} if
and only if for every $X\in \mathfrak{F}\left( X_{0}\right) $ and every $%
T\subset F\left( X\right) \times F\left( X\right) $ fulfills $%
T_{H_{1}}^{\prime \prime }=T_{H_{2}}^{\prime \prime }$.
\end{definition}

We denote the family of all $H$-closed congruences in $F$ by $Cl_{H}(F)$. We
can consider the category $C_{\Theta }\left( H\right) $ of the \textit{%
coordinate algebras} connected with the algebra $H\in \Theta $. Objects of
this category are quotient algebras $F\left( X\right) /T$, where $X\in 
\mathfrak{F}\left( X_{0}\right) $, $T\in Cl_{H}(F\left( X\right) )$.
Morphisms of this category are homomorphisms of algebras.

\begin{definition}
\label{automorphic_equivalence}Let $Id\left( H,X\right)
=\bigcap\limits_{\varphi \in \mathrm{Hom}\left( F\left( X\right) ,H\right)
}\ker \varphi $ be the minimal $H$-closed congruence in $\ F\left( X\right) $%
. Algebras $H_{1},H_{2}\in \Theta $ are \textbf{automorphically equivalent}
if and only if there exists a pair $\left( \Phi ,\Psi \right) ,$ where $\Phi
:\Theta ^{0}\rightarrow \Theta ^{0}$ is an automorphism,\textit{\ }$\Psi
:C_{\Theta }\left( H_{1}\right) \rightarrow $\textit{\ }$C_{\Theta }\left(
H_{2}\right) $ is an isomorphism subject to conditions:
\end{definition}

\begin{enumerate}
\item[A.] $\Psi \left( F\left( X\right) /Id\left( H_{1},X\right) \right)
=F\left( Y\right) /Id\left( H_{2},Y\right) $\textit{, where }$\Phi \left(
F\left( X\right) \right) =F\left( Y\right) $\textit{,}

\item[B.] $\Psi \left( F\left( X\right) /T\right) =F\left( Y\right) /%
\widetilde{T}$\textit{, where }$T\in Cl_{H_{1}}(F\left( X\right) )$\textit{, 
}$\widetilde{T}\in Cl_{H_{2}}(F\left( Y\right) )$\textit{,}

\item[C.] $\Psi $\textit{\ takes the natural epimorphism }$\overline{\tau }%
:F\left( X\right) /Id\left( H_{1},X\right) \rightarrow F\left( X\right) /T$%
\textit{\ \ to the natural epimorphism }$\Psi \left( \overline{\tau }\right)
:F\left( Y\right) /Id\left( H_{2},Y\right) \rightarrow F\left( Y\right) /%
\widetilde{T}$,

\item[D.] \textit{for every }$F\left( X_{1}\right) ,F\left( X_{2}\right) \in 
\mathrm{Ob}\Theta ^{0}$\textit{\ and every}%
\begin{equation*}
\nu \in \mathrm{Mor}_{C_{\Theta }\left( H_{1}\right) }\left( F\left(
X_{1}\right) /Id\left( H_{1},X_{1}\right) ,F\left( X_{2}\right) /Id\left(
H_{1},X_{2}\right) \right)
\end{equation*}%
\textit{if the diagram}%
\begin{equation*}
\begin{array}{ccc}
F\left( X_{1}\right) & \underset{\delta _{1}}{\rightarrow } & F\left(
X_{1}\right) /Id\left( H_{1},X_{1}\right) \\ 
\downarrow \mu &  & \nu \downarrow \\ 
F\left( X_{2}\right) & \overset{\delta _{2}}{\rightarrow } & F\left(
X_{2}\right) /Id\left( H_{1},X_{2}\right)%
\end{array}%
\end{equation*}%
\textit{is commutative then the diagram}%
\begin{equation*}
\begin{array}{ccc}
F\left( Y_{1}\right) & \underset{\widetilde{\delta _{1}}}{\rightarrow } & 
F\left( Y_{1}\right) /Id\left( H_{2},Y_{1}\right) \\ 
\downarrow \Phi \left( \mu \right) &  & \Psi \left( \nu \right) \downarrow
\\ 
F\left( Y_{2}\right) & \overset{\widetilde{\delta _{2}}}{\rightarrow } & 
F\left( Y_{2}\right) /Id\left( H_{2},Y_{2}\right)%
\end{array}%
\end{equation*}%
\textit{is also commutative, where }$\mu \in \mathrm{Mor}_{\Theta
^{0}}\left( F\left( X_{1}\right) ,F\left( X_{2}\right) \right) $,\textit{\ }$%
\delta _{i}$\textit{\ and }$\widetilde{\delta _{i}}$\textit{\ are the
natural epimorphisms, }$\Phi \left( F\left( X_{i}\right) \right) =F\left(
Y_{i}\right) $\textit{\ }$i=1,2$\textit{.}
\end{enumerate}

If we will compare the geometric equivalence and the automorphic equivalence
of the one-sorted universal algebras from the some variety $\Theta $, we
must take a countable set of symbols $X_{0}=\left\{ x_{1},x_{2},\ldots
,x_{n},\ldots \right\} $ and consider all free algebras $F\left( X\right) $
of the variety $\Theta $, generated by finitely subsets $X\subset X_{0}$.
These algebras: $\left\{ F\left( X\right) \mid X\subset X_{0},\left\vert
X\right\vert <\infty \right\} $ - will be objects of the category $\Theta
^{0}$. Morphisms of the category $\Theta ^{0}$ will be homomorphisms of
these algebras.

If our variety $\Theta $ is a variety of one-sorted algebras and possesses
the IBN property: for free algebras $F\left( X\right) ,F\left( Y\right) \in
\Theta $ we have $F\left( X\right) \cong F\left( Y\right) $ if and only if $%
\left\vert X\right\vert =\left\vert Y\right\vert $ - then we have \cite[%
Theorem 2]{PlotkinZhitAutCat} the decomposition%
\begin{equation}
\mathfrak{A=YS}.  \label{decomp}
\end{equation}%
of the group $\mathfrak{A}$ of all automorphisms of the category\textit{\ }$%
\Theta ^{0}$. Hear $\mathfrak{Y}$ is a group of all inner automorphisms of
the category\textit{\ }$\Theta ^{0}$ and $\mathfrak{S}$ is a group of all
strongly stable automorphisms of the category\textit{\ }$\Theta ^{0}$. The
definitions of the notions of inner automorphisms and strongly stable
automorphisms can be found, for example, in \cite{PlotkinZhitAutCat}, \cite%
{Tsurkov} and \cite{TsurkovManySortes}. But we will give these definitions
hear.

\begin{definition}
\label{inner}An automorphism $\Upsilon $ of a category $\mathfrak{K}$ is 
\textbf{inner}, if it is isomorphic as a functor to the identity
automorphism of the category $\mathfrak{K}$.
\end{definition}

It means that for every $A\in \mathrm{Ob}\mathfrak{K}$ there exists an
isomorphism $s_{A}^{\Upsilon }:A\rightarrow \Upsilon \left( A\right) $ such
that for every $\psi \in \mathrm{Mor}_{\mathfrak{K}}\left( A,B\right) $ the
diagram%
\begin{equation*}
\begin{array}{ccc}
A & \overrightarrow{s_{A}^{\Upsilon }} & \Upsilon \left( A\right) \\ 
\downarrow \psi &  & \Upsilon \left( \psi \right) \downarrow \\ 
B & \underrightarrow{s_{B}^{\Upsilon }} & \Upsilon \left( B\right)%
\end{array}%
\end{equation*}%
\noindent commutes.

\begin{definition}
\label{str_stab_aut}\textbf{\hspace{-0.08in}. }\textit{An automorphism $\Phi 
$ of the category }$\Theta ^{0}$\textit{\ is called \textbf{strongly stable}
if it satisfies the conditions:}

\begin{enumerate}
\item[StSt1)] $\Phi $\textit{\ preserves all objects of }$\Theta ^{0}$%
\textit{,}

\item[StSt2)] \textit{there exists a system of bijections }$\left\{
s_{F}^{\Phi }:F\rightarrow F\mid F\in \mathrm{Ob}\Theta ^{0}\right\} $%
\textit{\ such that }$\Phi $\textit{\ acts on the morphisms $\psi
:D\rightarrow F$ of }$\Theta ^{0}$\textit{\ by this way: }%
\begin{equation}
\Phi \left( \psi \right) =s_{F}^{\Phi }\psi \left( s_{D}^{\Phi }\right)
^{-1},  \label{biject_action}
\end{equation}

\item[StSt3)] $s_{F}^{\Phi }\mid _{X}=id_{X},$ \textit{\ for every free
algebra} $F=F\left( X\right) $.
\end{enumerate}
\end{definition}

The subgroup $\mathfrak{Y}$ is a normal in $\mathfrak{A}$.

By \cite{PlotkinSame} only strongly stable automorphism $\Phi $ can provide
us automorphic equivalence of algebras which not coincides with geometric
equivalence of algebras. Therefore, in some sense, difference from the
automorphic equivalence to the geometric equivalence is measured by the
quotient group $\mathfrak{A/Y\cong S/S\cap Y}$.

\section{Verbal operations and strongly stable automorphisms.\label%
{operations}}

\setcounter{equation}{0}

In this paper, as in the \cite{Tsurkov} we use the method of verbal
operations for the finding of the strongly stable automorphisms of the
category $\Theta ^{0}$. The explanation of this method there is in \cite%
{PlotkinZhitAutCat}, \cite{TsurkovAutomEquiv} and \cite{TsurkovManySortes}.

We denote the signature of our variety $\Theta $ by $\Omega $, by $m_{\omega
}$ we denote the arity of $\omega $ for every $\omega \in \Omega $. If $%
w=w\left( x_{1},\ldots ,x_{m_{\omega }}\right) \in F\left( x_{1},\ldots
,x_{m_{\omega }}\right) \in \mathrm{Ob}\Theta ^{0}$, then we can define in
every algebra $H\in \Theta $ by using of the this word $w$ the new operation 
$\omega ^{\ast }$: $\omega ^{\ast }\left( h_{1},\ldots ,h_{m_{\omega
}}\right) =w\left( h_{1},\ldots ,h_{m_{\omega }}\right) $ for every $%
h_{1},\ldots ,h_{m_{\omega }}\in H$. This operation we call the \textbf{%
verbal operation} defined on the algebra $H$ by the word $w$. If we have a
system of words $W=\left\{ w_{\omega }\mid \omega \in \Omega \right\} $ such
that $w_{\omega }\in F\left( x_{1},\ldots ,x_{m_{\omega }}\right) $ then we
denote by $H_{W}^{\ast }$ the algebra which coincide with $H$ as a set but
instead the original operations $\left\{ \omega \mid \omega \in \Omega
\right\} $ it has the system of the verbal operations $\left\{ \omega ^{\ast
}\mid \omega \in \Omega \right\} $ defined by words from the system $W$.

We suppose that we have the system of words $W=\left\{ w_{\omega }\mid
\omega \in \Omega \right\} $ satisfies the conditions:

\begin{enumerate}
\item[Op1)] $w_{\omega }\left( x_{1},\ldots ,x_{m_{\omega }}\right) \in
F\left( x_{1},\ldots ,x_{m_{\omega }}\right) \in \mathrm{Ob}\Theta ^{0}$,

\item[Op2)] for every $F=F\left( X\right) \in \mathrm{Ob}\Theta ^{0}$ there
exists an isomorphism $\sigma _{F}:F\rightarrow F_{W}^{\ast }$ such that $%
\sigma _{F}\mid _{X}=id_{X}$.
\end{enumerate}

It is clear isomorphisms $\sigma _{F}$ are defined uniquely by the system of
words $W$.

The set $S=\left\{ \sigma _{F}:F\rightarrow F\mid F\in \mathrm{Ob}\Theta
^{0}\right\} $ is a system of bijections which satisfies the conditions:

\begin{enumerate}
\item[B1)] for every homomorphism $\psi :A\rightarrow B\in \mathrm{Mor}%
\Theta ^{0}$ the mappings $\sigma _{B}\psi \sigma _{A}^{-1}$ and $\sigma
_{B}^{-1}\psi \sigma _{A}$ are homomorphisms;

\item[B2)] $\sigma _{F}\mid _{X}=id_{X}$ for every free algebra $F\in 
\mathrm{Ob}\Theta ^{0}$.
\end{enumerate}

So we can define the strongly stable automorphism\textit{\ }by this system
of bijections. This automorphism preserves all objects of $\Theta ^{0}$ and
acts on morphism of $\Theta ^{0}$ by formula (\ref{biject_action}), where $%
s_{F}^{\Phi }=$ $\sigma _{F}$.

Vice versa if we have a strongly stable automorphism $\Phi $ of the category 
$\Theta ^{0}$ then its system of bijections $S=\left\{ s_{F}^{\Phi
}:F\rightarrow F\mid F\in \mathrm{Ob}\Theta ^{0}\right\} $ defined uniquely.
Really, if $F\in \mathrm{Ob}\Theta ^{0}$ and $f\in F$ then%
\begin{equation}
s_{F}^{\Phi }\left( f\right) =s_{F}^{\Phi }\psi \left( x\right) =\left(
s_{F}^{\Phi }\psi \left( s_{D}^{\Phi }\right) ^{-1}\right) \left( x\right)
=\left( \Phi \left( \psi \right) \right) \left( x\right) ,
\label{autom_bijections}
\end{equation}%
where $D=F\left( x\right) $ - $1$-generated free linear algebra - and $\psi
:D\rightarrow F$ homomorphism such that $\psi \left( x\right) =f$. Obviously
that this system of bijections $S=\left\{ s_{F}^{\Phi }:F\rightarrow F\mid
F\in \mathrm{Ob}\Theta ^{0}\right\} $ fulfills conditions B1) and B2) with $%
\sigma _{F}=s_{F}^{\Phi }$.

If we have a system of bijections $S=\left\{ \sigma _{F}:F\rightarrow F\mid
F\in \mathrm{Ob}\Theta ^{0}\right\} $ which fulfills conditions B1) and B2)
than we can define the system of words $W=\left\{ w_{\omega }\mid \omega \in
\Omega \right\} $ satisfies the conditions Op1) and Op2) by formula%
\begin{equation}
w_{\omega }\left( x_{1},\ldots ,x_{m_{\omega }}\right) =\sigma _{F_{\omega
}}\left( \omega \left( \left( x_{1},\ldots ,x_{m_{\omega }}\right) \right)
\right) \in F_{\omega },  \label{der_veb_opr}
\end{equation}%
where $F_{\omega }=F\left( x_{1},\ldots ,x_{m_{\omega }}\right) $.

By formulas (\ref{autom_bijections}) and (\ref{der_veb_opr}) we can check
that there are

\begin{enumerate}
\item one to one and onto correspondence between strongly stable
automorphisms of the category $\Theta ^{0}$ and systems of bijections
satisfied the conditions B1) and B2)

\item one to one and onto correspondence between systems of bijections
satisfied the conditions B1) and B2) and systems of words satisfied the
conditions Op1) and Op2).
\end{enumerate}

Therefore we can calculate the group $\mathfrak{S}$ if we can find the all
system of words which fulfill conditions Op1) and Op2). For calculation of
the group $\mathfrak{S\cap Y}$ we also have a

\begin{criterion}
\label{inner_stable}The strongly stable automorphism $\Phi $ of the category 
$\Theta ^{0}$ which corresponds to the system of words $W$ is inner if and
only if for every $F\in \mathrm{Ob}\Theta ^{0}$ there exists an isomorphism $%
c_{F}:F\rightarrow F_{W}^{\ast }$ such that $c_{F}\psi =\psi c_{D}$ fulfills
for every $\left( \psi :D\rightarrow F\right) \in \mathrm{Mor}\Theta ^{0}$.
\end{criterion}

\section{Verbal operations in linear algebras.\label%
{operations_in_linear_alg}}

\setcounter{equation}{0}

From now on, the variety $\Theta $ will be some specific variety of the
linear algebras over field $k$ with characteristic $0$. We nether consider
the vanished varieties, i., e., variety defined by identity $x=0$ or variety
defined by identity $x_{1}x_{2}=0$. For linear algebras we can rewrite the
equation $t_{1}=t_{2}$, where $t_{1},t_{2}\in F$, $F\in \mathrm{Ob}\Theta
^{0}$, as $t_{1}-t_{2}=0$. So we can assume that every set of equations $T$
is a subset of $F\in \mathrm{Ob}\Theta ^{0}$ and this set $T\subset F$ we
understand as $\left\{ t=0\mid t\in T\right\} $. Congruences we can consider
as two sided ideals of the algebra $F$. From now on, the word "ideal" means
two sided ideal of the linear algebra.

We consider linear algebras as one-sorted universal algebras, i. e.,
multiplication by scalar we consider as $1$-ary operation for every $\lambda
\in k$: $H\ni h\rightarrow \lambda h\in H$ where $H\in \Theta $. Hence the
signature $\Omega $ of algebras of our variety contains these operations: $0$%
-ary operation $0$; $\left\vert k\right\vert $ $1$-ary operations of
multiplications by scalars; $2$-ary operation $\cdot $ and $2$-ary operation 
$+$. We will finding the system of words $W=\left\{ w_{\omega }\mid \omega
\in \Omega \right\} $ satisfies the conditions Op1) and Op2). We denote the
words corresponding to these operations by $w_{0}$, $w_{\lambda }$ for all $%
\lambda \in k$, $w_{\cdot }$, $w_{+}$. So%
\begin{equation}
W=\left\{ w_{\omega }\mid \omega \in \Omega \right\} =\left\{
w_{0},w_{\lambda }\left( \lambda \in k\right) ,w_{+},w_{\cdot }\right\}
\label{words_list}
\end{equation}%
in our case. From this on we consider only these systems of words.

Some time we denote by $\lambda \ast $ the operation defined by the word $%
w_{\lambda }\left( \lambda \in k\right) $, by $\perp $ the operation defined
by the word $w_{+}$ and by $\times $ the operation defined by the word $%
w_{\cdot }$.

We denote the group of all automorphisms of the field $k$ by $\mathrm{Aut}k$.

We use in our research the familiar fact that every variety of the linear
algebras over field with characteristic $0$ is multi-homogenous. So, for
example, every $F\left( X\right) \in \mathrm{Ob}\Theta ^{0}$ can be
decompose to the direct sum of the linear spaces of elements which are
homogeneous according the sum of degrees of generators from the set $X$: $%
F\left( X\right) =\bigoplus\limits_{i=1}^{\infty }F_{i}$. We also denote the
ideals $\bigoplus\limits_{i=j}^{\infty }F_{i}=F^{j}$. $F_{i}F_{j}\subset
F_{i+j}$ and $F^{i}F^{j}\subset F^{i+j}$ fulfills for every $1\leq
i,j<\infty $.

All our varieties $\Theta $ possess the IBN property, because $\left\vert
X\right\vert =\dim F/F^{2}$ fulfills for all free algebras $F=F\left(
X\right) \in \mathrm{Ob}\Theta ^{0}$. So we have the decomposition (\ref%
{decomp}) for group of all automorphisms of the category $\Theta ^{0}$.

\section{Classical varieties of linear algebras.\label{varieties}}

\setcounter{equation}{0}

In this Section we consider as the variety $\Theta $ the varieties of the
all commutative algebras, of the all power associative algebras, i., e., the
variety of linear algebras defined by identities%
\begin{equation*}
x\left( x^{2}\right) =\left( x^{2}\right) x,
\end{equation*}%
\begin{equation}
x\left( x\left( x^{2}\right) \right) =x\left( \left( x^{2}\right) x\right)
=\left( x\left( x^{2}\right) \right) x=\left( \left( x^{2}\right) x\right)
x=\left( x^{2}\right) \left( x^{2}\right)  \label{p_ass_id}
\end{equation}%
and so on, of the all alternative algebras, of the all Jordan algebras and
arbitrary subvariety defined by identities with coefficients from $%
\mathbb{Z}
$ of the variety of the all anticommutative algebras.

For the calculating of the group $\mathfrak{S}$ we consider an arbitrary
strongly stable automorphism $\Phi $ of the category $\Theta ^{0}$\ and we
will find for the all possible forms of the system of words $W$ which
corresponds to the automorphism $\Phi $.

For the all considered varieties $F\left( \varnothing \right) =\left\{
0\right\} $, so $w_{0}=0$.

The crucial point is the finding of the words $w_{\lambda }\left( x\right)
\in F\left( x\right) $, where $\lambda \in k$. The system of words $W$ must
fulfills conditions Op1) and Op2). By condition Op2) all axioms of the
variety $\Theta $ must hold in the $F_{W}^{\ast }$ for every $F\in \mathrm{Ob%
}\Theta ^{0}$. For every $\lambda \in k^{\ast }$ must holds 
\begin{equation}
w_{\lambda ^{-1}}\left( w_{\lambda }\left( x\right) \right) =w_{\lambda
}\left( w_{\lambda ^{-1}}\left( x\right) \right) =x.  \label{lambda_invers}
\end{equation}%
So the mapping $F\left( x\right) \ni x\rightarrow w_{\lambda }\left(
x\right) \in F\left( x\right) $ can by extended to the isomorphism.

By \cite{ShirshovComm} the variety of the all commutative algebras is a
Shreier variety and by \cite{Lewin} all automorphisms of the free algebras
of these varieties are tame. So if $\Theta $ is the variety of the all
commutative algebras, then for $\lambda \in k^{\ast }$ we have that 
\begin{equation}
w_{\lambda }\left( x\right) =\varphi \left( \lambda \right) x,
\label{scalar_mult}
\end{equation}%
where $\varphi \left( \lambda \right) \in k$. If $\lambda =0$, then must
fulfills $w_{\lambda }\left( x\right) =0$, so in this case we also can write
(\ref{scalar_mult}), where $\varphi \left( \lambda \right) =0$.

If $\Theta $ is the variety of the all power associative algebras, then $%
F\left( x\right) $ is the algebra of the polynomials of degrees no less then 
$1$. Hence from (\ref{lambda_invers}) we can conclude that $\deg w_{\lambda
}\left( x\right) =1$ and (\ref{scalar_mult}) holds. Similar result we have
for the variety of the all alternative algebras and for the variety of the
all Jordan algebras, because these varieties are subvarieties of the variety
of the all power associative algebras (see \cite[Chapter 2, Theorem 2]%
{ZhSlSheShiAlmost} and \cite[Chapter 3, Corollary from Theorem 8]%
{ZhSlSheShiAlmost}), so in these varieties $F\left( x\right) $ is also the
algebra of the polynomials of degrees no less then $1$.

We conclude (\ref{scalar_mult}) for the arbitrary subvariety defined by
identities with coefficients from $%
\mathbb{Z}
$ of the variety of the all anticommutative algebras from the fact that in
this variety $\dim F\left( x\right) =1$. Therefore in all our varieties we
have (\ref{scalar_mult}) for all $\lambda \in k$.

$\lambda \ast \left( \mu \ast x\right) =\left( \lambda \mu \right) \ast x$
must fulfills in $F\left( x\right) $ for every $\lambda ,\mu \in k$. We can
conclude from this axiom as in \cite{Tsurkov} that $\varphi \left( \lambda
\mu \right) =\varphi \left( \lambda \right) \varphi \left( \mu \right) $.
Also by using \cite[Proposition 4.2]{TsurkovAutomEquiv} we can prove that $%
\varphi :k\rightarrow k$ is a surjection.

After this we can conclude from axioms $x_{1}\perp 0=x_{1}$, $0\perp
x_{2}=x_{2}$, $x_{1}\perp x_{2}=x_{2}\perp x_{1}$ and $\lambda \ast \left(
x_{1}\perp x_{2}\right) =\left( \lambda \ast x_{1}\right) \perp \left(
\lambda \ast x_{2}\right) $ as in the \cite{Tsurkov} that in all our
varieties the%
\begin{equation}
w_{+}\left( x_{1},x_{2}\right) =x_{1}+x_{2}  \label{addition}
\end{equation}%
holds. Hear we must use the decomposition of $F\left( x_{1},x_{2}\right) $
to the direct sum of the linear spaces of elements which are homogeneous
according the sum of degrees of generators, which was used in \cite{Tsurkov}.

From axiom $\left( \lambda +\mu \right) \ast x=\lambda \ast x+\mu \ast x$
for every $\lambda ,\mu \in k$ we conclude that $\varphi \left( \lambda +\mu
\right) =\varphi \left( \lambda \right) +\varphi \left( \mu \right) $. So $%
\varphi \in \mathrm{Aut}k$.

Now we must to find the all possible forms of the word $w_{\cdot }\in
F\left( x_{1},x_{2}\right) $. Hear as in \cite{Tsurkov} we use the
decomposition of $F\left( x_{1},x_{2}\right) $ to the direct sum of the
linear spaces of elements which are homogeneous according the degree of $%
x_{1}$, and after this according the degree of $x_{2}$. From axioms $0\times
x_{2}=x_{1}\times 0=0$ and $\lambda \ast \left( x_{1}\times x_{2}\right)
=\left( \lambda \ast x_{1}\right) \times x_{2}=x_{1}\times \left( \lambda
\ast x_{2}\right) $ for every $\lambda \in k$ we can conclude that $w_{\cdot
}\in F_{2}\left( x_{1},x_{2}\right) $, $w_{\cdot }\left( x_{1},0\right)
=w_{\cdot }\left( 0,x_{2}\right) =0$. It means that for the variety of the
all power associative algebras%
\begin{equation}
w_{\cdot }\left( x_{1},x_{2}\right) =\alpha _{1,2}x_{1}x_{2}+\alpha
_{2,1}x_{2}x_{1},  \label{power_assoc_mult}
\end{equation}%
where $\alpha _{1,2},\alpha _{2,1}\in k$. By condition Op2) in this variety
the multiplication in $F_{W}^{\ast }$ can not by commutative or
anticommutative, so $\alpha _{1,2}\neq \pm \alpha _{2,1}$.

For the varieties of the all commutative algebras, of the all Jordan
algebras and for the arbitrary subvariety defined by identities with
coefficients from $%
\mathbb{Z}
$ of the variety of the all anticommutative algebras we have that%
\begin{equation}
w_{\cdot }\left( x_{1},x_{2}\right) =\alpha _{1,2}x_{1}x_{2},
\label{commut_mult}
\end{equation}%
where $\alpha _{1,2}\neq 0$.

For the variety of the all alternative algebras we conclude from axiom $%
\left( x_{1}\times x_{1}\right) \times x_{2}=x_{1}\times \left( x_{1}\times
x_{2}\right) $ that%
\begin{equation}
w_{\cdot }\left( x_{1},x_{2}\right) =\alpha _{1,2}x_{1}x_{2}
\label{altern_mult_1}
\end{equation}%
or%
\begin{equation}
w_{\cdot }\left( x_{1},x_{2}\right) =\alpha _{2,1}x_{2}x_{1},
\label{altern_mult_2}
\end{equation}%
where $\alpha _{1,2},\alpha _{2,1}\neq 0$.

Now we will prove for all our varieties that the systems of words $W$
defined above fulfill condition Op2). First of all we will prove that if $%
H\in \Theta $ then $H_{W}^{\ast }\in \Theta $. It means that we will check
that in the $H_{W}^{\ast }$ the all axioms of the variety $\Theta $ hold.
All these checking can be made by direct calculations. For all our varieties
we must check only these axioms of linear algebra: $\left(
x_{1}+x_{2}\right) \times x_{3}=\left( x_{1}\times x_{3}\right) +\left(
x_{2}\times x_{3}\right) $ and $x_{1}\times \left( x_{2}+x_{3}\right)
=\left( x_{1}\times x_{2}\right) +\left( x_{1}\times x_{3}\right) $, because
other axioms are immediately concluded from the forms of the words of the
system $W$.

For the varieties of the all commutative algebras and of the all Jordan
algebras we must check the axiom $x_{1}\times x_{2}=x_{2}\times x_{1}$.

Also for the variety of the all Jordan algebras we also must check the axiom 
$\left( \left( x_{1}\times x_{1}\right) \times x_{2}\right) \times
x_{1}=\left( x_{1}\times x_{1}\right) \times \left( x_{2}\times x_{1}\right) 
$.

For the variety of the all alternative algebras we must check the axioms $%
\left( x_{1}\times x_{1}\right) \times x_{2}=x_{1}\times \left( x_{1}\times
x_{2}\right) $ and $x_{2}\times \left( x_{1}\times x_{1}\right) =\left(
x_{2}\times x_{1}\right) \times x_{1}$.

For the variety of the all power associative algebras we must check the
axioms (\ref{p_ass_id}) where the original multiplication changed by
operation $\times $. We consider a power associative algebra $H$ and the
free $1$-generated power associative algebra $F=F\left( x\right) $. We take
a monomial $u\in F$ such that coefficient of $u$ is $1$, and $\deg u=n$. If
in the monomial $u$ we change the original multiplication by operation $%
\times $ than we achieve $u^{\times }\in F_{W}^{\ast }$. It is easy to prove
by induction that $u^{\times }\left( h\right) =\left( \alpha _{1,2}+\alpha
_{2,1}\right) ^{n-1}u\left( h\right) $ holds for every $h\in H$. It finishes
the checking of the necessary axioms.

For the arbitrary subvariety defined by identities with coefficients from $%
\mathbb{Z}
$ of the variety of the all anticommutative algebras we must check the axiom 
$x_{1}\times x_{2}=-$ $1\ast \left( x_{2}\times x_{1}\right) $ and the
specific axioms of this subvariety. Our field $k$ is infinite so all axioms
of this subvariety can by presented in the homogeneous form: $%
\sum\limits_{i=1}^{m}\lambda _{i}u_{i}=0$, where $\lambda _{i}\in 
\mathbb{Z}
$, $u_{i}\in F\left( x_{1},\ldots ,x_{r}\right) $, $F\left( x_{1},\ldots
,x_{r}\right) \in \mathrm{Ob}\Theta ^{0}$, $u_{i}$ are monomials with
coefficients $1$, $\deg u_{i}=n$ for every $i$. $\lambda _{i}\in 
\mathbb{Z}
$ because all operations of the process of the homogenization of the
identities can by made with coefficients from $%
\mathbb{Q}
$. We must check that for every $H\in \Theta $ and every $h_{1},\ldots
,h_{r}\in H$ the $\sum\limits_{i=1}^{m}\lambda _{i}\ast u_{i}^{\times
}\left( h_{1},\ldots ,h_{r}\right) =0$ holds. As above by induction we can
prove that $u_{i}^{\times }\left( h_{1},\ldots ,h_{r}\right) =\alpha
_{1,2}^{n-1}u_{i}\left( h_{1},\ldots ,h_{r}\right) $, so $%
\sum\limits_{i=1}^{m}\lambda _{i}\ast u_{i}^{\times }\left( h_{1},\ldots
,h_{r}\right) =\alpha _{1,2}^{n-1}\sum\limits_{i=1}^{m}\lambda
_{i}u_{i}\left( h_{1},\ldots ,h_{r}\right) =0$.

After all these checking we can conclude that for every $F=F\left( X\right)
\in \mathrm{Ob}\Theta ^{0}$ there exists a homomorphism $\sigma
_{F}:F\rightarrow F_{W}^{\ast }$ such that $\sigma _{F}\mid _{X}=id_{X}$. As
in the \cite{Tsurkov} we can prove that $\sigma _{F}$ is an isomorphism, so
the systems of words defined above fulfill condition Op2). It completes the
calculation of the group $\mathfrak{S}$.

For calculation of the group $\mathfrak{Y}\cap \mathfrak{S}$ we can prove as
in the \cite{Tsurkov} that for the all considered varieties the strongly
stable automorphism $\Phi $ which corresponds to the defined above system of
words $W$ is inner if and only if $\alpha _{2,1}=0$ and $\varphi =id_{k}$.

So, as in the \cite{Tsurkov} we can prove that for the variety of the all
power associative algebras $\mathfrak{A/Y\cong }\left( U\left( k\mathbf{S}_{%
\mathbf{2}}\right) \mathfrak{/}U\left( k\left\{ e\right\} \right) \right) 
\mathfrak{\leftthreetimes }\mathrm{Aut}k$, where $\mathbf{S}_{\mathbf{2}}$
is the symmetric group of the set which has $2$ elements, $U\left( k\mathbf{S%
}_{\mathbf{2}}\right) $ is the group of all invertible elements of the group
algebra $k\mathbf{S}_{\mathbf{2}}$, $U\left( k\left\{ e\right\} \right) $ is
a group of all invertible elements of the subalgebra $k\left\{ e\right\} $,
every $\varphi \in \mathrm{Aut}k$ acts on the algebra $k\mathbf{S}_{\mathbf{2%
}}$ by natural way: $\varphi \left( ae+b\left( 12\right) \right) =\varphi
\left( a\right) e+\varphi \left( b\right) \left( 12\right) $. But there is
an isomorphism of groups%
\begin{equation*}
U\left( k\mathbf{S}_{\mathbf{2}}\right) \ni ae+b\left( 12\right) \rightarrow
\left( a+b,a-b\right) \in k^{\ast }\times k^{\ast }
\end{equation*}%
so there is isomorphism%
\begin{equation*}
U\left( kS_{2}\right) /U\left( k\left\{ e\right\} \right) =U\left(
kS_{2}\right) /k^{\ast }e\ni \left( ae+b\left( 12\right) \right) k^{\ast
}e\rightarrow \frac{a+b}{a-b}\in k^{\ast }.
\end{equation*}%
Hence we prove the

\begin{theorem}
\label{group_for_power_associative copy(1)}For variety of the all power
associative algebras 
\begin{equation*}
\mathfrak{A/Y\cong }k^{\ast }\mathfrak{\leftthreetimes }\mathrm{Aut}k
\end{equation*}%
holds.
\end{theorem}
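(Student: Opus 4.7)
The plan is to assemble the data established earlier in this section into the stated semidirect product. By the decomposition (\ref{decomp}), $\mathfrak{A/Y \cong S/(S \cap Y)}$, so I need to identify $\mathfrak{S}$ as a group and then quotient out the inner strongly stable automorphisms. From the analysis above, every strongly stable $\Phi$ corresponds to a unique system $W = \{w_0, w_\lambda, w_+, w_\cdot\}$ with $w_0 = 0$, $w_+(x_1,x_2) = x_1 + x_2$, $w_\lambda(x) = \varphi(\lambda) x$ for some $\varphi \in \mathrm{Aut}\, k$, and $w_\cdot(x_1,x_2) = \alpha_{1,2} x_1 x_2 + \alpha_{2,1} x_2 x_1$ with $\alpha_{1,2} \neq \pm \alpha_{2,1}$. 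Hence strongly stable automorphisms are parameterized by triples $(\varphi, \alpha_{1,2}, \alpha_{2,1})$ of the stated type.

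First I would identify $\mathfrak{S}$ as a group. Encode the pair $(\alpha_{1,2}, \alpha_{2,1})$ as the element $\alpha_{1,2} e + \alpha_{2,1}(12) \in k\mathbf{S}_2$; the inequality $\alpha_{1,2} \neq \pm\alpha_{2,1}$ is precisely the invertibility condition in $k\mathbf{S}_2$, so the $\alpha$-data ranges over $U(k\mathbf{S}_2)$. Composing two strongly stable automorphisms via formulas (\ref{autom_bijections})--(\ref{der_veb_opr}) translates, on the level of the parameters, to multiplication in $k\mathbf{S}_2$ with the second factor's coefficients twisted through $\varphi$ of the first factor. Together with the obvious composition of field automorphisms, this yields $\mathfrak{S} \cong U(k\mathbf{S}_2) \leftthreetimes \mathrm{Aut}\, k$, where $\mathrm{Aut}\, k$ acts coefficientwise on $k\mathbf{S}_2$.

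Next I would apply Criterion \ref{inner_stable} along the lines of \cite{Tsurkov} to verify that $\Phi$ is inner iff $\varphi = \mathrm{id}_k$ and $\alpha_{2,1} = 0$. In the chosen coordinates this is exactly the subgroup $U(k\{e\}) = k^* e$ of $U(k\mathbf{S}_2)$, with trivial $\mathrm{Aut}\, k$-component. Since $\mathrm{Aut}\, k$ stabilizes the subalgebra $k\{e\}$, this subgroup is normal in the semidirect product, giving
\begin{equation*}
\mathfrak{A/Y} \cong \bigl(U(k\mathbf{S}_2)/U(k\{e\})\bigr) \leftthreetimes \mathrm{Aut}\, k.
\end{equation*}

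Finally I would push through the explicit isomorphism $U(k\mathbf{S}_2) \to k^* \times k^*$ sending $ae + b(12)$ to $(a+b, a-b)$, under which $U(k\{e\})$ corresponds to the diagonal, and then through $(u,v) \mapsto u/v$ to identify the quotient with $k^*$ via $(ae + b(12))k^*e \mapsto (a+b)/(a-b)$. It remains to track the $\mathrm{Aut}\, k$-action through these two isomorphisms and confirm that it descends to the natural action of $\mathrm{Aut}\, k$ on $k^*$, which gives the claimed $\mathfrak{A/Y} \cong k^* \leftthreetimes \mathrm{Aut}\, k$. The main obstacle is the bookkeeping in step two--checking that the $\varphi$-twist in the composition law on $\mathfrak{S}$ is correctly carried through the quotient and the two isomorphisms so that the final semidirect action is genuinely the standard one; all other steps reduce to direct computation using results already in hand.
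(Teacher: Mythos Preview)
Your proposal is correct and follows essentially the same route as the paper: encode the multiplication data as an element of $U(k\mathbf{S}_2)$, identify the inner strongly stable automorphisms with $U(k\{e\})$ and trivial $\varphi$, and then pass to $k^\ast$ via the isomorphism $ae+b(12)\mapsto (a+b,a-b)$ and the map $(u,v)\mapsto u/v$. The paper leaves the identification of $\mathfrak{S}$ and of $\mathfrak{S}\cap\mathfrak{Y}$ to a reference to \cite{Tsurkov} and does not spell out the check that the $\mathrm{Aut}\,k$-action descends to the standard one on $k^\ast$, so your outline is in fact slightly more explicit than the paper's own argument.
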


By similar way we prove

\begin{theorem}
\label{group_for_alternative}For the variety of the all alternative algebras%
\begin{equation*}
\mathfrak{A/Y\cong }\mathbf{S}_{\mathbf{2}}\times \mathrm{Aut}k
\end{equation*}%
holds.
\end{theorem}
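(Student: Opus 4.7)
The plan is to mirror the proof of Theorem \ref{group_for_power_associative copy(1)}, replacing the power associative multiplication word by the alternative one. From the analysis in Section \ref{operations_in_linear_alg} we already know that every strongly stable automorphism $\Phi \in \mathfrak{S}$ corresponds bijectively to a system of words with $w_0 = 0$, $w_+(x_1,x_2) = x_1 + x_2$, $w_\lambda(x) = \varphi(\lambda)x$ for a unique $\varphi \in \mathrm{Aut}\,k$, and $w_\cdot(x_1,x_2) \in \{\alpha x_1 x_2 : \alpha \in k^*\} \cup \{\alpha x_2 x_1 : \alpha \in k^*\}$.

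First I would encode the admissible multiplication words inside the group algebra $k\mathbf{S}_2$ via $\alpha x_1 x_2 \mapsto \alpha e$ and $\alpha x_2 x_1 \mapsto \alpha(12)$. A direct check shows that the image $k^*\mathbf{S}_2 = \{\alpha \sigma : \alpha \in k^*,\ \sigma \in \mathbf{S}_2\}$ is closed under multiplication and inversion inside $U(k\mathbf{S}_2)$, and that the assignment $(\alpha,\sigma) \mapsto \alpha\sigma$ is a group isomorphism $k^* \times \mathbf{S}_2 \cong k^*\mathbf{S}_2$. Exactly as in the power associative case, composition of strongly stable automorphisms corresponds to the product in this subgroup twisted by the coefficientwise action of $\mathrm{Aut}\,k$ on $k\mathbf{S}_2$, giving
\[
\mathfrak{S} \cong (k^* \times \mathbf{S}_2) \leftthreetimes \mathrm{Aut}\,k.
\]

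Next, the inner criterion already established in Section \ref{operations_in_linear_alg} states that $\Phi$ is inner iff $\alpha_{2,1} = 0$ and $\varphi = id_k$. Under our encoding this picks out exactly those $\Phi$ whose multiplication word lies in $k^* e$ and whose field automorphism is trivial, so $\mathfrak{S} \cap \mathfrak{Y}$ is identified with the normal subgroup $k^* \times \{e\} \times \{id_k\}$ of the semidirect product above. Since $\mathrm{Aut}\,k$ fixes the basis $\{e,(12)\}$ of $k\mathbf{S}_2$ pointwise, it acts trivially on the quotient $(k^* \times \mathbf{S}_2)/k^* \cong \mathbf{S}_2$, and hence
\[
\mathfrak{A/Y} \cong \mathfrak{S}/(\mathfrak{S}\cap\mathfrak{Y}) \cong \mathbf{S}_2 \times \mathrm{Aut}\,k,
\]
which is the stated isomorphism.

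The main obstacle is to verify cleanly that composition of strongly stable automorphisms corresponds to multiplication in the subgroup $k^*\mathbf{S}_2$ twisted by the $\mathrm{Aut}\,k$-action. This, however, is a direct substitution of the words of one automorphism into those of the other and runs exactly parallel to the corresponding step in the proof of Theorem \ref{group_for_power_associative copy(1)}; no new ingredient beyond the classification of $w_\cdot$ for the alternative variety is needed.
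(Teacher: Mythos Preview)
Your proposal is correct and follows essentially the same route as the paper: encode the admissible multiplication words inside $k\mathbf{S}_2$, identify $\mathfrak{S}$ with the corresponding subgroup semidirect with $\mathrm{Aut}\,k$, quotient by the inner part $k^{\ast}e$, and observe that the induced $\mathrm{Aut}\,k$-action on $\mathbf{S}_2$ is trivial. The paper merely writes ``By similar way we prove'' and leaves these details implicit; you have spelled them out accurately. One small correction: the classification of $w_{\cdot}$ for alternative algebras and the inner criterion you invoke are established in Section~\ref{varieties}, not Section~\ref{operations_in_linear_alg}.
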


And for other considered varieties we achieve

\begin{theorem}
\label{group_for_commutative}For the variety of the all commutative algebras%
\begin{equation*}
\mathfrak{A/Y\cong }\mathrm{Aut}k
\end{equation*}%
holds.
\end{theorem}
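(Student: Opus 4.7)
My plan is to follow the template of the preceding theorems in this section, combining the classification of strongly stable automorphisms carried out above with the inner criterion (Criterion \ref{inner_stable}). By (\ref{decomp}) and the remark that follows it, $\mathfrak{A}/\mathfrak{Y} \cong \mathfrak{S}/(\mathfrak{S}\cap\mathfrak{Y})$, and by the bijective correspondence between strongly stable automorphisms and systems of words satisfying Op1)--Op2), the analysis above has already pinned down the possible shape of such a $W$ in the commutative case: $w_0=0$, $w_\lambda(x) = \varphi(\lambda) x$ with $\varphi\in\mathrm{Aut}k$, $w_+(x_1,x_2) = x_1+x_2$, and $w_\cdot(x_1,x_2) = \alpha\, x_1x_2$ with $\alpha\in k^{\ast}$ (formulas (\ref{scalar_mult}), (\ref{addition}) and (\ref{commut_mult})). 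I denote the corresponding strongly stable automorphism by $\Phi_{\varphi,\alpha}$, so that $\mathfrak{S}$ is in bijection with $\mathrm{Aut}k \times k^{\ast}$ as a set.

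Next I would compute the group law on $\mathfrak{S}$ by composing the associated bijections. The bijection $\sigma_F = s_F^{\Phi_{\varphi,\alpha}}$ is $\varphi$-semilinear, satisfies $\sigma_F(fg) = \alpha\,\sigma_F(f)\sigma_F(g)$, and is the identity on the generators of $F$ by B2). Hence evaluating $\sigma_F^{(1)}\sigma_F^{(2)}$ on a product $x_ix_j$ gives $\sigma_F^{(1)}(\alpha_2 x_ix_j) = \varphi_1(\alpha_2)\alpha_1\, x_ix_j$, and a similar short check on $\lambda x_i$ shows that the composite bijection is $\varphi_1\varphi_2$-semilinear. The induced group law is therefore $(\varphi_1,\alpha_1)(\varphi_2,\alpha_2) = (\varphi_1\varphi_2,\ \alpha_1\varphi_1(\alpha_2))$, so $\mathfrak{S}\cong k^{\ast}\rtimes\mathrm{Aut}k$ with $\mathrm{Aut}k$ acting on $k^{\ast}$ by evaluation.

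Finally, to identify $\mathfrak{S}\cap\mathfrak{Y}$ I would apply Criterion \ref{inner_stable} with $D = F(y)$. Writing $c_D(y) = a \in D$ and evaluating $c_F\psi = \psi c_D$ on $y$ under the morphism $\psi:D\to F$ with $\psi(y) = f$ gives $c_F(f) = \psi(a)$; as $f$ varies the requirement that $c_F$ be additive forces the components of $a$ of degree $\geq 2$ to vanish, so $a = a_1 y$ with $a_1\in k^{\ast}$ and $c_F$ is scalar multiplication by $a_1$. The $\varphi$-semilinearity of $c_F:F \to F_W^{\ast}$ and the multiplicative compatibility $c_F(fg) = \alpha\,c_F(f)c_F(g)$ then force $\varphi = id_k$ and $a_1 = \alpha^{-1}$; conversely, for any $\alpha\in k^{\ast}$ the map $c_F(f) = \alpha^{-1}f$ is readily checked to be an isomorphism $F \to F_W^{\ast}$ fulfilling the naturality condition. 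Hence $\mathfrak{S}\cap\mathfrak{Y}$ consists of exactly $\{\Phi_{id_k,\alpha}:\alpha\in k^{\ast}\}$, which is precisely the normal $k^{\ast}$-factor of the semidirect product, and the quotient collapses to $\mathfrak{A}/\mathfrak{Y}\cong\mathrm{Aut}k$. The one delicate step is the inner-criterion argument: extracting from additivity across all specializations $\psi$ the conclusion that $c_F$ must be scalar multiplication, which parallels the analogous step for the other varieties treated in \cite{Tsurkov} and in the earlier theorems of this section.
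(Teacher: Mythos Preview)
Your proof is correct and follows essentially the same route as the paper. The paper also classifies the admissible systems $W$ (arriving at (\ref{scalar_mult}), (\ref{addition}), (\ref{commut_mult})), then invokes the same inner criterion---stated here as ``as in \cite{Tsurkov}, $\Phi$ is inner iff $\alpha_{2,1}=0$ and $\varphi=id_k$'' (the condition $\alpha_{2,1}=0$ being automatic in the commutative case)---and reads off $\mathfrak{A}/\mathfrak{Y}\cong\mathrm{Aut}k$; you simply make the semidirect-product structure on $\mathfrak{S}$ and the Criterion~\ref{inner_stable} computation (forcing $c_F$ to be scalar via additivity, then $\varphi=id_k$ and $a_1=\alpha^{-1}$) explicit rather than deferring them to \cite{Tsurkov}.
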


\begin{theorem}
\label{groupe_for_Jordan}For the variety of the all Jordan algebras 
\begin{equation*}
\mathfrak{A/Y\cong }\mathrm{Aut}k
\end{equation*}%
holds.
\end{theorem}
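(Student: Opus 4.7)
The plan is to mimic the argument for Theorem~\ref{group_for_power_associative copy(1)} in the commutative Jordan setting. The computations of Section~\ref{varieties} specialized to the Jordan variety already show that every strongly stable automorphism $\Phi$ of $\Theta^{0}$ is encoded by a system of words $W$ with $w_{0}=0$, $w_{\lambda}(x)=\varphi(\lambda)\,x$ for some $\varphi\in\mathrm{Aut}\,k$ by (\ref{scalar_mult}), $w_{+}(x_{1},x_{2})=x_{1}+x_{2}$ by (\ref{addition}), and $w_{\cdot}(x_{1},x_{2})=\alpha_{1,2}\,x_{1}x_{2}$ with $\alpha_{1,2}\in k^{\ast}$ by (\ref{commut_mult}); conversely, every such $W$ satisfies Op2) for the Jordan variety, because both the commutativity axiom and the Jordan identity $((x_{1}\times x_{1})\times x_{2})\times x_{1}=(x_{1}\times x_{1})\times(x_{2}\times x_{1})$ become, after substitution, a scalar multiple (namely $\alpha_{1,2}^{3}$) of the same identity satisfied in $H\in\Theta$. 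Hence $\mathfrak{S}$ is in bijection with $k^{\ast}\times\mathrm{Aut}\,k$, and a direct calculation of $\Phi_{1}\circ\Phi_{2}$ on the generating words shows that the induced group law on these pairs is the semidirect product $k^{\ast}\leftthreetimes\mathrm{Aut}\,k$ for the natural action of $\mathrm{Aut}\,k$ on $k^{\ast}$.

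Next I would invoke the innerness criterion recorded just before the theorem statements: $\Phi$ is inner iff $\alpha_{2,1}=0$ and $\varphi=id_{k}$. Since $\alpha_{2,1}$ does not occur in (\ref{commut_mult}) at all, the condition collapses to $\varphi=id_{k}$ in the Jordan case. The forward implication I would realize concretely via the multi-homogeneous rescalings $c_{F}\colon F\to F_{W}^{\ast}$ sending $f\in F_{n}$ to $\alpha_{1,2}^{n-1}f$, where $F=\bigoplus_{n\geq 1}F_{n}$ is the decomposition of Section~\ref{operations_in_linear_alg}: these $c_{F}$ are bijective, intertwine $\cdot$ with $\times$ by a one-line degree count (for $f\in F_{m}$, $g\in F_{n}$ one has $\alpha_{1,2}^{m-1}f\times\alpha_{1,2}^{n-1}g=\alpha_{1,2}^{m+n-1}fg=c_{F}(fg)$), and satisfy $c_{F}\psi=\psi c_{D}$ for every $\psi\in\mathrm{Mor}\,\Theta^{0}$ because morphisms in $\Theta^{0}$ preserve the total degree. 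Thus $\mathfrak{S}\cap\mathfrak{Y}$ is the normal subgroup of pairs $(\alpha,id_{k})$, isomorphic to $k^{\ast}$.

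Combining with the decomposition (\ref{decomp}) then yields
\[
\mathfrak{A/Y}\;\cong\;\mathfrak{S}/(\mathfrak{S}\cap\mathfrak{Y})\;\cong\;(k^{\ast}\leftthreetimes\mathrm{Aut}\,k)/k^{\ast}\;\cong\;\mathrm{Aut}\,k,
\]
the last isomorphism being induced by the projection $(\alpha,\varphi)\mapsto\varphi$ whose kernel is exactly the $k^{\ast}$ identified above. I expect no serious obstacle: the Jordan identity surviving the substitution is a routine degree count, and the compatibility of the $c_{F}$ with all morphisms of $\Theta^{0}$ is essentially automatic from multi-homogeneity. In fact the present theorem is the cleanest specialization of the machinery of Section~\ref{varieties}, since both the absence of an $\alpha_{2,1}$-component and the commutativity of $w_{\cdot}$ kill the $k^{\ast}$-piece of $\mathfrak{S}$ on passing to the quotient.
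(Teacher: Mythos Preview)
Your overall strategy matches the paper's: identify $\mathfrak{S}$ with the pairs $(\alpha_{1,2},\varphi)\in k^{\ast}\times\mathrm{Aut}\,k$ via (\ref{commut_mult}), invoke the innerness criterion stated just before the theorems, and pass to the quotient. That part is fine and is exactly what the paper does (the paper simply says ``And for other considered varieties we achieve'' and refers back to \cite{Tsurkov}).

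There is, however, a genuine error in your concrete realization of the inner direction. Your maps $c_{F}\colon f\in F_{n}\mapsto\alpha_{1,2}^{\,n-1}f$ are indeed isomorphisms $F\to F_{W}^{\ast}$ (this is just $\sigma_{F}$), but they do \emph{not} satisfy $c_{F}\psi=\psi c_{D}$ for arbitrary $\psi\in\mathrm{Mor}\,\Theta^{0}$. Your justification, ``morphisms in $\Theta^{0}$ preserve the total degree,'' is false: a homomorphism of free Jordan algebras may send a generator to any element, e.g.\ $\psi\colon F(x)\to F(x_{1},x_{2})$ with $\psi(x)=x_{1}+x_{1}x_{2}$. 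For this $\psi$ one computes $c_{F}\psi(x)=x_{1}+\alpha_{1,2}x_{1}x_{2}$ while $\psi c_{D}(x)=x_{1}+x_{1}x_{2}$, so the square does not commute unless $\alpha_{1,2}=1$.

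The repair is easy and in fact simpler than what you wrote: take $c_{F}(f)=\alpha_{1,2}^{-1}f$ (a single global scalar, independent of degree). This is an isomorphism $F\to F_{W}^{\ast}$ because $c_{F}(f)\times c_{F}(g)=\alpha_{1,2}\cdot\alpha_{1,2}^{-1}f\cdot\alpha_{1,2}^{-1}g=\alpha_{1,2}^{-1}fg=c_{F}(fg)$, and it commutes with every $\psi$ simply by $k$-linearity of $\psi$. With this correction your argument goes through and coincides with the paper's.
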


\begin{theorem}
\label{group_for_anticommutative}For the arbitrary subvariety defined by
identities with coefficients from $%
\mathbb{Z}
$ of the variety of the all anticommutative algebras 
\begin{equation*}
\mathfrak{A/Y\cong }\mathrm{Aut}k
\end{equation*}%
holds.
\end{theorem}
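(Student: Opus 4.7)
The plan is to apply the decomposition $\mathfrak{A} = \mathfrak{YS}$ from (\ref{decomp}), so that $\mathfrak{A}/\mathfrak{Y} \cong \mathfrak{S}/(\mathfrak{S} \cap \mathfrak{Y})$, and then to compute the right-hand side using the classification of systems of words $W$ that has already been carried out in this section for the anticommutative setting.

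First I would read off the explicit description of $\mathfrak{S}$. By the one-to-one correspondence between strongly stable automorphisms and systems of words satisfying Op1, Op2 recorded in Section \ref{operations}, together with the analysis above, every element of $\mathfrak{S}$ is determined by a pair $(\varphi, \alpha) \in \mathrm{Aut}\, k \times k^{\ast}$: one has $w_{0} = 0$, $w_{+} = x_{1} + x_{2}$ by (\ref{addition}), $w_{\lambda}(x) = \varphi(\lambda) x$ by (\ref{scalar_mult}), and $w_{\cdot}(x_{1}, x_{2}) = \alpha\, x_{1} x_{2}$ with $\alpha = \alpha_{1,2}$ by (\ref{commut_mult}). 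Thus as a set $\mathfrak{S}$ is in bijection with $\mathrm{Aut}\, k \times k^{\ast}$.

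Next I would determine the group law, translating composition of strongly stable automorphisms into composition of systems of words via (\ref{biject_action}) and (\ref{der_veb_opr}). A routine direct computation, of exactly the kind already used above in the power associative case, shows that $\mathfrak{S}$ is the semidirect product $k^{\ast} \leftthreetimes \mathrm{Aut}\, k$, with $\mathrm{Aut}\, k$ acting on $k^{\ast}$ in the natural way.

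Finally I would invoke Criterion \ref{inner_stable}, together with the fact already established in this section that a strongly stable automorphism is inner exactly when $\varphi = \mathrm{id}_{k}$ (the condition $\alpha_{2,1} = 0$ of the general criterion is vacuous here, since $w_{\cdot}$ contains no $x_{2} x_{1}$ term). This identifies $\mathfrak{S} \cap \mathfrak{Y}$ with the $k^{\ast}$-factor of the semidirect product, whence
\[
\mathfrak{A}/\mathfrak{Y} \;\cong\; \mathfrak{S}/(\mathfrak{S} \cap \mathfrak{Y}) \;\cong\; (k^{\ast} \leftthreetimes \mathrm{Aut}\, k)/k^{\ast} \;\cong\; \mathrm{Aut}\, k.
\]
The step I expect to require the most care is the composition calculation: one must keep track of how the bijections $s_{F}^{(i)}$ interact with the verbal operations on $F_{W^{(j)}}^{\ast}$ in order to verify that the resulting structure is a semidirect product (so that the $k^{\ast}$-factor is normal and quotients out cleanly), rather than a direct product or some more exotic extension. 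Once this is pinned down, the remaining ingredients have all been assembled in the earlier part of the section.
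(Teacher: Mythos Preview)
Your proposal is correct and follows essentially the same route as the paper: use the decomposition $\mathfrak{A}=\mathfrak{YS}$ to reduce to $\mathfrak{S}/(\mathfrak{S}\cap\mathfrak{Y})$, read off from (\ref{scalar_mult}), (\ref{addition}), (\ref{commut_mult}) that $\mathfrak{S}$ is parametrized by pairs $(\varphi,\alpha)\in\mathrm{Aut}\,k\times k^{\ast}$, and then use the innerness criterion (here reducing to $\varphi=\mathrm{id}_{k}$) to identify $\mathfrak{S}\cap\mathfrak{Y}$ with $k^{\ast}$. The paper leaves the group-law computation and the semidirect structure implicit by pointing to \cite{Tsurkov}, whereas you make that step explicit; otherwise the arguments coincide.
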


The results of this Section and of the \cite{Tsurkov} can be summarized in
this table:

\begin{tabular}{|c|c|c|}
\hline
& Variety of the all & $\mathfrak{A/Y}$ \\ \hline
1 & linear algebras & $k^{\ast }\mathfrak{\leftthreetimes }\mathrm{Aut}k$ \\ 
\hline
2 & commutative algebras & $\mathrm{Aut}k$ \\ \hline
3 & power associative algebras & $k^{\ast }\mathfrak{\leftthreetimes }%
\mathrm{Aut}k$ \\ \hline
4 & alternative algebras & $\mathbf{S}_{\mathbf{2}}\times \mathrm{Aut}k$ \\ 
\hline
5 & Jordan algebras & $\mathrm{Aut}k$ \\ \hline
6 & 
\begin{tabular}{c}
arbitrary subvariety of anticommutative \\ 
algebras defined by identities with coefficients from $%
\mathbb{Z}
$%
\end{tabular}
& $\mathrm{Aut}k$ \\ \hline
\end{tabular}

\section{Examples.\label{examples}}

\setcounter{equation}{0}

We can see from the previous Section that in the all considered varieties of
the linear algebras the group $\mathfrak{A/Y}$ is generated by cosets which
are presented by no more than two kinds of the strongly stable automorphisms
of the category $\Theta ^{0}$. One kind of automorphisms is connected to the
changing of the multiplication by scalar and second one is connected to the
changing of the multiplication of the elements of the algebras. In this
Section we will present some examples of the pairs of linear algebras such
that the considered automorphism provides the automorphic equivalence of
these algebras but these algebras are not geometrically equivalent. These
examples will be presented for the all considered above varieties of
algebras and for both these kinds of the strongly stable automorphisms, when
they exist in the group $\mathfrak{A/Y}$.

Same time we will use this simple

\begin{proposition}
\label{T}If $F\left( X\right) $ is a free universal algebra of the variety $%
\Theta $ and $T$ is an arbitrary congruence in $F\left( X\right) \times
F\left( X\right) $, then $T$ is an $H$-closed congruence, where $H=F\left(
X\right) /T$.
\end{proposition}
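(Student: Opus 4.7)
The plan is to unfold the definitions of the closure operators $'$ and $''$ and use the natural projection to $H=F(X)/T$ to squeeze $T''_H$ between $T$ and itself.

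First I would note the easy inclusion $T \subset T''_H$, which holds for any set of equations: by definition $T''_H = \bigcap_{\mu \in T'_H} \ker \mu$ where $T'_H = \{\mu \in \mathrm{Hom}(F(X),H) \mid T \subset \ker \mu\}$, so every $t \in T$ lies in $\ker \mu$ for each $\mu \in T'_H$, hence in the intersection. (This is the standard fact that $T \mapsto T''_H$ is a closure operator in a Galois connection.)

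For the reverse inclusion $T''_H \subset T$, the key observation is that with $H := F(X)/T$ the natural epimorphism $\pi : F(X) \to F(X)/T = H$ is itself a morphism of $\Theta^{0}$-type with $\ker \pi = T$. In particular $T \subset \ker \pi$, so $\pi \in T'_H$, and therefore
\begin{equation*}
T''_H \;=\; \bigcap_{\mu \in T'_H} \ker \mu \;\subset\; \ker \pi \;=\; T.
\end{equation*}
Combining the two inclusions gives $T = T''_H$, i.e.\ $T$ is $H$-closed.

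I do not foresee any obstacle: the argument is just a one-line Galois-connection computation, and the only substantive ingredient is that, since $T$ is already assumed to be a congruence, the quotient $F(X)/T$ is a legitimate object of $\Theta$ and the canonical projection is a well-defined homomorphism. It is worth remarking that this is exactly the reason the hypothesis ``$T$ is a congruence'' (rather than just an arbitrary subset) is used; an arbitrary $T \subset F(X)\times F(X)$ would not in general have a corresponding quotient algebra $H$ in $\Theta$.
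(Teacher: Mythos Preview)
Your proof is correct and follows essentially the same approach as the paper: both use that the natural epimorphism $\tau:F(X)\to F(X)/T$ satisfies $\ker\tau=T$, so it appears in the intersection defining $T''_H$ and forces $T''_H\subset T$, while the reverse inclusion is automatic. Your version is simply a more explicit rendering of the paper's one-line argument.
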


\begin{proof}
$\left( T\right) _{H}^{\prime \prime }=\bigcap\limits_{\substack{ \varphi
\in \mathrm{Hom}\left( F\left( X\right) ,H\right) ,  \\ \ker \varphi
\supseteq T}}\ker \varphi $. There is the natural epimorphism $\tau :F\left(
X\right) \rightarrow F\left( X\right) /T$, such that $\ker \tau =T$. So $%
\left( T\right) _{H}^{\prime \prime }=T$.
\end{proof}

\begin{example}
\label{aut_1_3_4}.
\end{example}

In this example we at first will denote by $\Theta $ the variety of the all
linear algebras. We will consider a strongly stable automorphism $\Phi $ of
the category $\Theta ^{0}$ which corresponds to the system of words%
\begin{equation}
W=\left\{ w_{0}=0,w_{\lambda }\left( x\right) =\varphi \left( \lambda
\right) x\left( \lambda \in k\right) ,w_{+}\left( x_{1},x_{2}\right)
=x_{1}+x_{2},w_{\cdot }\left( x_{1},x_{2}\right) =x_{1}x_{2}\right\}
\label{list_with_aut}
\end{equation}%
where $\varphi \in \mathrm{Aut}k$. We assume about automorphism $\varphi $
that there is $\lambda \in k$ such $\varphi \left( \lambda \right) \neq
\lambda $ and $\varphi \left( \lambda \right) \neq \lambda ^{-1}$. These
conditions fulfill, for example, if $k=k_{0}\left( \theta _{1},\theta
_{2}\right) $ is a transcendental extension of degree $2$ of the same
subfield $k_{0}$ and $\varphi \left( \theta _{1}\right) =\theta _{2}$.

We will consider the free algebra of $\Theta $ with two free generators: $%
F=F\left( x_{1},x_{2}\right) $. We consider the ideal $T=\left\langle
t,F^{3}\right\rangle $, where $t=\lambda x_{1}x_{2}+x_{2}x_{1}$. We will
denote $H=F/T$. By \cite[Theorem 5.1]{TsurkovAutomEquiv} $H$ and $%
H_{W}^{\ast }$ are automorphically equivalent.

\begin{proposition}
\label{prop_aut_1_3_4}$H$ and $H_{W}^{\ast }$ are not geometrically
equivalent.
\end{proposition}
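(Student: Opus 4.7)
The plan is to exhibit the ideal $T=\langle t,F^3\rangle$ itself as a set of equations whose $H$-closure and $H_W^{\ast}$-closure differ. By Proposition~\ref{T}, $T_{H}^{\prime\prime}=T$, and since a $k$-basis of $H=F/T$ is $\{x_1,x_2,x_1^2,x_1x_2,x_2^2\}$, we have $x_1x_2\notin T=T_H^{\prime\prime}$. I would then prove $x_1x_2\in T_{H_W^{\ast}}^{\prime\prime}$, which immediately yields geometric inequivalence.

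To compute $T_{H_W^{\ast}}^{\prime\prime}$, take an arbitrary morphism $\mu\in\mathrm{Hom}(F,H_W^{\ast})$ with $T\subseteq\ker\mu$. Since the addition and algebra multiplication of $H_W^{\ast}$ agree with those of $H$, while the scalar action is twisted via $\lambda\ast h=\varphi(\lambda)h$, the generators of $F^3$ land in $H^3=0$ automatically, and $\mu(t)=0$ translates to $\varphi(\lambda)\,\mu(x_1)\mu(x_2)+\mu(x_2)\mu(x_1)=0$ in $H$. I would then write $\mu(x_i)=\alpha_ix_1+\beta_ix_2+r_i$ with $r_i\in H^2$; using $x_2x_1=-\lambda x_1x_2$ and $H^3=0$, expanding both products and comparing coefficients of $x_1^2$, $x_1x_2$, $x_2^2$ yields the three scalar conditions
\[
(\varphi(\lambda)+1)\alpha_1\alpha_2=0,\quad (\varphi(\lambda)+1)\beta_1\beta_2=0,\quad \alpha_1\beta_2(\varphi(\lambda)-\lambda)+\alpha_2\beta_1(1-\lambda\varphi(\lambda))=0.
\]
Because $\varphi$ fixes the prime field, $\varphi(\lambda)=-1$ would force $\lambda=-1$ and so $\varphi(\lambda)=\lambda$, contradicting the hypothesis; hence $\alpha_1\alpha_2=\beta_1\beta_2=0$. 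The hypotheses $\varphi(\lambda)\neq\lambda$ and $\varphi(\lambda)\neq\lambda^{-1}$ further ensure that the two coefficients $\varphi(\lambda)-\lambda$ and $1-\lambda\varphi(\lambda)$ in the third equation are nonzero.

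The main obstacle is the resulting case analysis. Assuming both $(\alpha_1,\beta_1)\neq(0,0)$ and $(\alpha_2,\beta_2)\neq(0,0)$, the first two conditions force either $\alpha_2=\beta_1=0$ (when $\alpha_1\neq 0$, whence $\beta_2\neq 0$) or $\alpha_1=\beta_2=0$ (when $\alpha_1=0$, whence $\beta_1\neq 0$ and then $\alpha_2\neq 0$); in each case the third condition reduces to a nonzero product equalling zero, a contradiction. Therefore $\mu(x_1)\in H^2$ or $\mu(x_2)\in H^2$, and in either case $\mu(x_1x_2)=\mu(x_1)\mu(x_2)\in H^3=0$. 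Hence $x_1x_2\in\ker\mu$ for every admissible $\mu$, so $x_1x_2\in T_{H_W^{\ast}}^{\prime\prime}\setminus T_H^{\prime\prime}$, proving that $H$ and $H_W^{\ast}$ are not geometrically equivalent.
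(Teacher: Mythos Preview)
Your argument is correct and is essentially the same computation as the paper's, just organized slightly differently. The paper, instead of computing $T_{H_W^{\ast}}^{\prime\prime}$ directly, invokes the bijection $T\mapsto s_F(T)$ between $H_W^{\ast}$-closed and $H$-closed ideals (Remark~5.1 of \cite{TsurkovAutomEquiv}) and shows that $s_F(T)=\langle \varphi(\lambda)x_1x_2+x_2x_1,\,F^3\rangle$ fails to be $H$-closed; it does so by writing every $\psi\in\mathrm{Hom}(F,H)$ as $\tau\alpha$ with $\alpha\in\mathrm{End}\,F$ and analysing when $\alpha(s_F(t))\in T$. Your direct computation of $\mathrm{Hom}(F,H_W^{\ast})$ yields the same three scalar equations (after the substitution $x_2x_1=-\lambda x_1x_2$ in $H$) and the same four-case analysis, and reaches the same conclusion that $x_1x_2$ lies in every relevant kernel. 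The only difference is cosmetic: you avoid the detour through $s_F$ at the cost of having to note that $\mu$ is $\varphi$-semilinear and that triple products in $H$ vanish automatically, both of which you handle correctly.
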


\begin{proof}
By Proposition \ref{T} $T$ is an $H$-closed ideal. If $H$ and $H_{W}^{\ast }$
are geometrically equivalent then $T$ is an $H_{W}^{\ast }$-closed ideal.
The system of words (\ref{list_with_aut}) is a subject of Op1) and Op2), so
there exists the isomorphism $s_{F}:F\rightarrow F_{W}^{\ast }$ such that $%
s_{F}\left( x_{i}\right) =x_{i}$, $i=1,2$. By \cite[Remark 5.1]%
{TsurkovAutomEquiv} $s_{F}\left( T\right) $ is an $H$-closed ideal. By the
method of \cite{Tsurkov} we can prove that $s_{F}\left( F^{3}\right) =F^{3}$%
, so $s_{F}\left( T\right) =\left\langle s_{F}\left( t\right)
,F^{3}\right\rangle $, $s_{F}\left( t\right) =\varphi \left( \lambda \right)
x_{1}x_{2}+x_{2}x_{1}$. $\left( s_{F}\left( T\right) \right) _{H}^{\prime
\prime }=\bigcap\limits_{_{\substack{ \psi \in \mathrm{Hom}\left( F,H\right)
,  \\ \ker \psi \supseteq s_{F}\left( T\right) }}}\ker \psi $. We denote by $%
\tau $ the natural epimorphism $F\rightarrow F/T=H$. By projective propriety
of the free algebras for every $\psi \in \mathrm{Hom}\left( F,H\right) $
there exists $\alpha \in \mathrm{End}F$ such that $\psi =$ $\tau \alpha $.
We can write that $\alpha \left( x_{i}\right) =\alpha _{1i}x_{1}+\alpha
_{2i}x_{2}+f_{i}$, where $i=1,2$, $f_{i}\in F^{2}$, $\alpha _{ji}\in k$.%
\begin{equation*}
\alpha \left( s_{F}\left( t\right) \right) \equiv \left( \varphi \left(
\lambda \right) +1\right) \alpha _{11}\alpha _{12}x_{1}^{2}+\left( \varphi
\left( \lambda \right) \alpha _{11}\alpha _{22}+\alpha _{12}\alpha
_{21}\right) x_{1}x_{2}+
\end{equation*}%
\begin{equation*}
\left( \varphi \left( \lambda \right) \alpha _{12}\alpha _{21}+\alpha
_{11}\alpha _{22}\right) x_{2}x_{1}+\left( \varphi \left( \lambda \right)
+1\right) \alpha _{21}\alpha _{22}x_{2}^{2}\left( \func{mod}F^{3}\right) .
\end{equation*}%
If $\ker \psi \supseteq s_{F}\left( T\right) $ where $\psi =$ $\tau \alpha $
then $\alpha \left( s_{F}\left( t\right) \right) \in T$ or%
\begin{equation*}
\left( \varphi \left( \lambda \right) +1\right) \alpha _{11}\alpha
_{12}x_{1}^{2}+\left( \varphi \left( \lambda \right) \alpha _{11}\alpha
_{22}+\alpha _{12}\alpha _{21}\right) x_{1}x_{2}+
\end{equation*}%
\begin{equation*}
\left( \varphi \left( \lambda \right) \alpha _{12}\alpha _{21}+\alpha
_{11}\alpha _{22}\right) x_{2}x_{1}+\left( \varphi \left( \lambda \right)
+1\right) \alpha _{21}\alpha _{22}x_{2}^{2}=\rho \left( \lambda
x_{1}x_{2}+x_{2}x_{1}\right) ,
\end{equation*}%
where $\rho \in k$. $\varphi \left( \lambda \right) \neq -1$, so $\alpha
_{11}\alpha _{12}=0$ and $\alpha _{21}\alpha _{22}=0$. These conditions
fulfill in one of the following four cases.

\textit{Case 1.} $\alpha _{11}=\alpha _{22}=0$. In this case $\alpha
_{12}\alpha _{21}=\rho \lambda $, $\varphi \left( \lambda \right) \alpha
_{12}\alpha _{21}=\rho $, so $\varphi \left( \lambda \right) \lambda \rho
=\rho $. Therefore, by our assumption about $\varphi $, $\rho =0$ and,
because $\varphi \left( \lambda \right) \neq 0$, $\alpha _{12}\alpha _{21}=0$%
. If $\alpha _{12}=0$ then $\alpha \left( x_{2}\right) \in F^{2}$ and $\ker
\psi \supset \mathrm{sp}_{k}\left\{ x_{1}x_{2},x_{2}x_{1}\right\} $. If $%
\alpha _{21}=0$ then $\alpha \left( x_{1}\right) \in F^{2}$ and also $\ker
\psi \supset \mathrm{sp}_{k}\left\{ x_{1}x_{2},x_{2}x_{1}\right\} $.

\textit{Case 2.} $\alpha _{12}=\alpha _{21}=0$. In this case $\varphi \left(
\lambda \right) \alpha _{11}\alpha _{22}=\rho \lambda $, $\alpha _{11}\alpha
_{22}=\rho $, so $\varphi \left( \lambda \right) \rho =\rho \lambda $.
Therefore, by our assumption about $\varphi $, $\rho =0$ and as above $\ker
\psi \supset \mathrm{sp}_{k}\left\{ x_{1}x_{2},x_{2}x_{1}\right\} $.

\textit{Case 3.} $\alpha _{11}=\alpha _{21}=0$. In this case $\alpha \left(
x_{1}\right) \in F^{2}$ and $\ker \psi \supset \mathrm{sp}_{k}\left\{
x_{1}x_{2},x_{2}x_{1}\right\} $.

\textit{Case 4.} $\alpha _{12}=\alpha _{22}=0$. In this case $\alpha \left(
x_{2}\right) \in F^{2}$ and $\ker \psi \supset \mathrm{sp}_{k}\left\{
x_{1}x_{2},x_{2}x_{1}\right\} $.

So in all these cases $\left( s_{F}\left( T\right) \right) _{H}^{\prime
\prime }$ $\supset \mathrm{sp}_{k}\left\{ x_{1}x_{2},x_{2}x_{1}\right\} $
and $s_{F}\left( T\right) \cap F_{2}\neq \left( s_{F}\left( T\right) \right)
_{H}^{\prime \prime }\cap F_{2}$. Therefore $s_{F}\left( T\right) \neq
\left( s_{F}\left( T\right) \right) _{H}^{\prime \prime }$ and $s_{F}\left(
T\right) $ is is not an $H$-closed ideal. This contradiction finishes the
proof.
\end{proof}

It is clear that this example is valid for the variety of the all power
associative algebras and for the variety of the all alternative algebras.
Indeed, if we consider the free algebra $F=F\left( x_{1},x_{2}\right) $ in
one of these varieties, ideal $T=\left\langle t,F^{3}\right\rangle \subset F$%
, the quotient algebra $H=F/T$ and the system of words $W$ as in (\ref%
{list_with_aut}) then, as in the previous calculations, we can prove that
algebras $H$ and $H_{W}^{\ast }$ are automorphically equivalent but are not
geometrically equivalent.

\begin{example}
\label{aut_2_5}
\end{example}

In this example we at first will denote by $\Theta $ the variety of the all
commutative algebras. We will consider a strongly stable automorphism $\Phi $
of the category $\Theta ^{0}$ which corresponds to the system of words (\ref%
{list_with_aut}). We assume about automorphism $\varphi $ that $\varphi \neq
id_{k}$. It means there is $\lambda \in k$ such that $\varphi \left( \lambda
\right) \neq \lambda $.

We will consider the free algebra of $\Theta $ with two free generators: $%
F=F\left( x_{1},x_{2}\right) $. We consider the ideal $T=\left\langle
t,F^{4}\right\rangle $, where $t=\lambda x_{1}\left( x_{1}x_{2}\right)
+x_{2}\left( x_{1}^{2}\right) $. We will denote $H=F/T$. Algebras $H$ and $%
H_{W}^{\ast }$ are automorphically equivalent.

\begin{proposition}
$H$ and $H_{W}^{\ast }$ are not geometrically equivalent.
\end{proposition}

\begin{proof}
As in the proof of the Proposition \ref{prop_aut_1_3_4} $s_{F}\left(
T\right) =\left\langle s_{F}\left( t\right) ,F^{4}\right\rangle $. But now $%
s_{F}\left( t\right) =\varphi \left( \lambda \right) x_{1}\left(
x_{1}x_{2}\right) +x_{2}\left( x_{1}^{2}\right) $. As in that proof we will
consider all endomorphisms $\alpha \in \mathrm{End}F$ such that $\alpha
\left( s_{F}\left( t\right) \right) \in T$ and will calculate the $\ker \psi 
$ where $\psi =\tau \alpha $, $\tau $ is the natural epimorphism $%
F\rightarrow F/T=H$. As above $\alpha \left( x_{i}\right) =\alpha
_{1i}x_{1}+\alpha _{2i}x_{2}+f_{i}$, where $i=1,2$, $f_{i}\in F^{2}$.%
\begin{equation*}
\alpha \left( s_{F}\left( t\right) \right) \equiv \left( \varphi \left(
\lambda \right) +1\right) \alpha _{11}^{2}\alpha _{12}x_{1}^{3}+\left(
\varphi \left( \lambda \right) \alpha _{11}^{2}\alpha _{22}+\left( \varphi
\left( \lambda \right) +2\right) \alpha _{11}\alpha _{12}\alpha _{21}\right)
x_{1}\left( x_{1}x_{2}\right) +
\end{equation*}%
\begin{equation*}
\left( \varphi \left( \lambda \right) \alpha _{11}\alpha _{21}\alpha
_{22}+\alpha _{12}\alpha _{21}^{2}\right) x_{1}\left( x_{2}^{2}\right)
+\left( \varphi \left( \lambda \right) \alpha _{11}\alpha _{12}\alpha
_{21}+\alpha _{11}^{2}\alpha _{22}\right) x_{2}\left( x_{1}^{2}\right) +
\end{equation*}%
\begin{equation*}
\left( \varphi \left( \lambda \right) \alpha _{12}\alpha _{21}^{2}+\left(
\varphi \left( \lambda \right) +2\right) \alpha _{11}\alpha _{21}\alpha
_{22}\right) x_{2}\left( x_{1}x_{2}\right) +\left( \varphi \left( \lambda
\right) +1\right) \alpha _{21}^{2}\alpha _{22}x_{2}^{3}\left( \func{mod}%
F^{4}\right) .
\end{equation*}%
So if $\alpha \left( s_{F}\left( t\right) \right) \in T$ then $\alpha
_{11}\alpha _{12}=0$ and $\alpha _{21}\alpha _{22}=0$. As above we must
consider the following four cases:

\textit{Case 1.} $\alpha _{11}=\alpha _{22}=0$. In this case we conclude
from $\alpha \left( s_{F}\left( t\right) \right) \in T$ that $\alpha
_{12}\alpha _{21}=0$ holds. If $\alpha _{12}=0$ then $\alpha \left(
x_{2}\right) \in F^{2}$ and 
\begin{equation}
\ker \psi \supset \mathrm{sp}_{k}\left\{ x_{1}\left( x_{1}x_{2}\right)
,x_{1}\left( x_{2}^{2}\right) ,x_{2}\left( x_{1}^{2}\right) ,x_{2}\left(
x_{1}x_{2}\right) \right\} .  \label{ker_2_5}
\end{equation}%
If $\alpha _{21}=0$ then $\alpha \left( x_{1}\right) \in F^{2}$ and also (%
\ref{ker_2_5}) holds.

\textit{Case 2.} $\alpha _{12}=\alpha _{21}=0$. In this case we conclude
from $\alpha \left( s_{F}\left( t\right) \right) \in T$ that $\varphi \left(
\lambda \right) \alpha _{11}^{2}\alpha _{22}=\rho \lambda $, $\alpha
_{11}^{2}\alpha _{22}=\rho $, where $\rho \in k$. $\varphi \left( \lambda
\right) \neq \lambda $, so $\rho =0$ and $\alpha _{11}\alpha _{22}=0$. So,
as above $\alpha \left( x_{1}\right) \in F^{2}$ or $\alpha \left(
x_{2}\right) \in F^{2}$ and (\ref{ker_2_5}) holds.

\textit{Case 3.} $\alpha _{11}=\alpha _{21}=0$. In this case $\alpha \left(
x_{1}\right) \in F^{2}$ and (\ref{ker_2_5}) holds.

\textit{Case 4.} $\alpha _{12}=\alpha _{22}=0$. In this case $\alpha \left(
x_{2}\right) \in F^{2}$ and (\ref{ker_2_5}) holds.

So in all these cases $\left( s_{F}\left( T\right) \right) _{H}^{\prime
\prime }$ $\supset \mathrm{sp}_{k}\left\{ x_{1}\left( x_{1}x_{2}\right)
,x_{1}\left( x_{2}^{2}\right) ,x_{2}\left( x_{1}^{2}\right) ,x_{2}\left(
x_{1}x_{2}\right) \right\} $ and $s_{F}\left( T\right) \cap F_{3}\neq \left(
s_{F}\left( T\right) \right) _{H}^{\prime \prime }\cap F_{3}$. Therefore $%
s_{F}\left( T\right) \neq \left( s_{F}\left( T\right) \right) _{H}^{\prime
\prime }$. As above the proof is finished.
\end{proof}

It is clear that this example is also valid for the variety of the all
Jordan algebras.

\begin{example}
\label{aut_6}
\end{example}

The Uroboros 1 program designed for symbolic computation in Lie algebras was
used for the finding of this example. We denote by $\Theta $ the variety of
the all Lie algebras. As in the both previous examples we will consider a
strongly stable automorphism $\Phi $ of the category $\Theta ^{0}$ which
corresponds to the system of words (\ref{list_with_aut}). We assume about
automorphism $\varphi $ that $\varphi \neq id_{k}$. It means there is $%
\lambda \in k$ such $\varphi \left( \lambda \right) \neq \lambda $.

$L=L\left( x_{1},x_{2}\right) $ is a free Lie algebra with two free
generators. The algebra $L/L^{6}$ has a basis%
\begin{equation*}
\{x_{1}=e_{1},x_{2}=e_{2},[x_{1},x_{2}]=e_{3},[x_{1},\left[ x_{1},x_{2}%
\right] ]=e_{4},[\left[ x_{1},x_{2}\right] ,x_{2}]=e_{5},
\end{equation*}%
\begin{equation*}
\lbrack x_{1},\left[ x_{1},\left[ x_{1},x_{2}\right] \right] ]=e_{6},[x_{1},%
\left[ \left[ x_{1},x_{2}\right] ,x_{2}\right] ]=e_{7},[\left[ \left[
x_{1},x_{2}\right] ,x_{2}\right] ,x_{2}]=e_{8},
\end{equation*}%
\begin{equation*}
\lbrack x_{1},\left[ x_{1},\left[ x_{1},\left[ x_{1},x_{2}\right] \right] %
\right] ]=e_{9},[x_{1},\left[ x_{1},\left[ \left[ x_{1},x_{2}\right] ,x_{2}%
\right] \right] ]=e_{10},[x_{1},\left[ \left[ \left[ x_{1},x_{2}\right]
,x_{2}\right] ,x_{2}\right] ]=e_{11},
\end{equation*}%
\begin{equation*}
\lbrack \left[ x_{1},\left[ x_{1},x_{2}\right] \right] ,\left[ x_{1},x_{2}%
\right] ]=e_{12},[\left[ x_{1},x_{2}\right] ,\left[ \left[ x_{1},x_{2}\right]
,x_{2}\right] ]=e_{13},[\left[ \left[ \left[ x_{1},x_{2}\right] ,x_{2}\right]
,x_{2}\right] ,x_{2}]=e_{14}\},
\end{equation*}%
where multiplication in $L$ we denote by Lie brackets. It will be more
punctual to write $e_{1}=x_{1}+L^{6}$ and so on, but we chose the shorter
form of the notation. This basis was found by program Uroboros 1. We
consider the ideal $T=\left\langle t,L^{6}\right\rangle $, where $t=\lambda
e_{10}+e_{12}$. As above we denote $H=L/T$. Algebras $H$ and $H_{W}^{\ast }$
are automorphically equivalent.

\begin{proposition}
$H$ and $H_{W}^{\ast }$ are not geometrically equivalent.
\end{proposition}

\begin{proof}
As in the proof of the Proposition \ref{prop_aut_1_3_4} $s_{F}\left(
T\right) =\left\langle s_{F}\left( t\right) ,L^{6}\right\rangle $. But now $%
s_{F}\left( t\right) =\varphi \left( \lambda \right) e_{10}+e_{12}$. As
above we will consider all endomorphisms $\alpha \in \mathrm{End}F$ such
that $\alpha \left( s_{F}\left( t\right) \right) \in T$ and will calculate
the $\ker \psi $ where $\psi =\tau \alpha $, $\tau $ is the natural
epimorphism $L\rightarrow L/T=H$. As above $\alpha \left( x_{i}\right)
=\alpha _{1,i}x_{1}+\alpha _{2,i}x_{2}+f_{i}$, where $i=1,2$, $f_{i}\in
L^{2} $. $\alpha \left( s_{F}\left( t\right) \right) \equiv
\sum\limits_{i=9}^{14}\alpha _{i}e_{i}\left( \func{mod}L^{6}\right) $, where%
\begin{equation*}
\alpha _{9}=-\varphi \left( \lambda \right) \alpha _{1,1}^{2}\alpha
_{1,2}\left( \alpha _{1,1}\alpha _{2,2}-\alpha _{1,2}\alpha _{2,1}\right) ,
\end{equation*}%
\begin{equation*}
\alpha _{10}=\varphi \left( \lambda \right) \alpha _{1,1}\left( \alpha
_{1,1}\alpha _{2,2}-\alpha _{1,2}\alpha _{2,1}\right) \left( \alpha
_{1,1}\alpha _{2,2}+2\alpha _{1,2}\alpha _{2,1}\right) ,
\end{equation*}%
\begin{equation*}
\alpha _{11}=-\varphi \left( \lambda \right) \alpha _{2,1}\left( \alpha
_{1,1}\alpha _{2,2}-\alpha _{1,2}\alpha _{2,1}\right) \left( 2\alpha
_{1,1}\alpha _{2,2}+\alpha _{1,2}\alpha _{2,1}\right) ,
\end{equation*}%
\begin{equation*}
\alpha _{12}=-\alpha _{1,1}\left( \alpha _{1,1}\alpha _{2,2}-\alpha
_{1,2}\alpha _{2,1}\right) \left( \varphi \left( \lambda \right) \alpha
_{1,2}\alpha _{2,1}-\alpha _{1,1}\alpha _{2,2}+\alpha _{1,2}\alpha
_{2,1}\right) ,
\end{equation*}%
\begin{equation*}
\alpha _{13}=\alpha _{2,1}\left( \alpha _{1,1}\alpha _{2,2}-\alpha
_{1,2}\alpha _{2,1}\right) \left( -\varphi \left( \lambda \right) \left(
\alpha _{1,2}\alpha _{2,1}+\alpha _{1,1}\alpha _{2,2}\right) +\left( \alpha
_{1,1}\alpha _{2,2}-\alpha _{1,2}\alpha _{2,1}\right) \right) ,
\end{equation*}%
\begin{equation*}
\alpha _{14}=\varphi \left( \lambda \right) \alpha _{2,1}^{2}\alpha
_{2,2}\left( \alpha _{1,1}\alpha _{2,2}-\alpha _{1,2}\alpha _{2,1}\right) .
\end{equation*}%
These coefficients also were found by program Uroboros 1. If $\alpha \left(
s_{F}\left( t\right) \right) \in T$ then $\alpha \left( s_{F}\left( t\right)
\right) \equiv \lambda \rho e_{10}+\rho e_{12}\left( \func{mod}L^{6}\right) $%
, where $\rho \in k$. So if $\alpha \left( s_{F}\left( t\right) \right) \in
T $ then $-\varphi \left( \lambda \right) \alpha _{1,1}^{2}\alpha
_{1,2}\left( \alpha _{1,1}\alpha _{2,2}-\alpha _{1,2}\alpha _{2,1}\right) =0$
and $\varphi \left( \lambda \right) \alpha _{2,1}^{2}\alpha _{2,2}\left(
\alpha _{1,1}\alpha _{2,2}-\alpha _{1,2}\alpha _{2,1}\right) $. If $\alpha
_{1,1}\alpha _{2,2}-\alpha _{1,2}\alpha _{2,1}=0$ then $\alpha \left(
x_{1}\right) \equiv \beta \alpha \left( x_{2}\right) \left( \func{mod}%
L^{2}\right) $, where $\beta \in k$. Hence $\alpha \left[ x_{1},x_{2}\right]
\in L^{3}$ and $\alpha \left( L^{5}\right) \subset L^{6}$. Therefore $\ker
\psi \supseteq \mathrm{sp}_{k}\left\{ e_{9},\ldots ,e_{14}\right\} $.

If $\alpha _{1,1}\alpha _{2,2}-\alpha _{1,2}\alpha _{2,1}\neq 0$ then from $%
\alpha \left( s_{F}\left( t\right) \right) \in T$ we conclude $\alpha
_{1,1}\alpha _{1,2}=0$, $\alpha _{2,1}\alpha _{2,2}=0$. So as above we must
consider the following four cases:

\textit{Case 1.} $\alpha _{11}=\alpha _{22}=0$. In this case we conclude
from $\alpha \left( s_{F}\left( t\right) \right) \in T$ by consideration of
the coefficient $\alpha _{11}$ that $\alpha _{1,2}\alpha _{2,1}=0$. As above
from this equation we conclude that $\alpha \left( x_{1}\right) \in L^{2}$
or $\alpha \left( x_{2}\right) \in L^{2}$. So $\alpha \left[ x_{1},x_{2}%
\right] \in L^{3}$ and as above $\ker \psi \supseteq \mathrm{sp}_{k}\left\{
e_{9},\ldots ,e_{14}\right\} $.

\textit{Case 2.} $\alpha _{12}=\alpha _{21}=0$. In this case we conclude
from $\alpha \left( s_{F}\left( t\right) \right) \in T$ by consideration of
the coefficient $\alpha _{10}$ that $\varphi \left( \lambda \right) \alpha
_{1,1}^{3}\alpha _{2,2}^{2}=\lambda \rho $ and by consideration of the
coefficient $\alpha _{12}$ that $\alpha _{1,1}^{3}\alpha _{2,2}^{2}=\rho $. $%
\varphi \left( \lambda \right) \neq \lambda $, so $\rho =0$ and $\alpha
_{1,1}\alpha _{2,2}=0$. As above from this equation we conclude that $\alpha
\left( x_{1}\right) \in L^{2}$ or $\alpha \left( x_{2}\right) \in L^{2}$.
Therefore $\ker \psi \supseteq \mathrm{sp}_{k}\left\{ e_{9},\ldots
,e_{14}\right\} $.

In the \textit{Case 3:} $\alpha _{11}=\alpha _{21}=0$ - and in the \textit{%
Case 4:} $\alpha _{12}=\alpha _{22}=0$ - also $\alpha \left( x_{1}\right)
\in L^{2}$ or $\alpha \left( x_{2}\right) \in L^{2}$. So $\ker \psi
\supseteq \mathrm{sp}_{k}\left\{ e_{9},\ldots ,e_{14}\right\} $.

Therefore in all these cases $\left( s_{F}\left( T\right) \right)
_{H}^{\prime \prime }$ $\supseteq L^{5}\supsetneqq s_{F}\left( T\right) $.
\end{proof}

\begin{example}
\label{s_1_3}
\end{example}

In \cite[Section 5]{Tsurkov} was given an example of the algebras in the
variety of the all linear algebras which are automorphically equivalent but
not\textbf{\ }geometrically equivalent. The variety of the all linear
algebras was denoted by $\Theta $. We considered the strongly stable
automorphism $\Phi $ of the category $\Theta ^{0}$ corresponding to the
system of words%
\begin{equation*}
W=\left\{ w_{0}=0,w_{\lambda }\left( x\right) =\lambda x\left( \lambda \in
k\right) ,w_{+}\left( x_{1},x_{2}\right) =x_{1}+x_{2},w_{\cdot }\left(
x_{1},x_{2}\right) =ax_{1}x_{2}+bx_{2}x_{1}\right\} ,
\end{equation*}%
where $b\neq 0$. In the free algebra $F\left( x_{1},x_{2}\right) \in \mathrm{%
Ob}\Theta ^{0}$ we considered the verbal ideal $I$ generated by identity $%
\left( x_{1}x_{1}\right) x_{2}=0$. By \cite[Theorem 5.1]{TsurkovAutomEquiv}
the algebras $H=F\left( x_{1},x_{2}\right) /I$ and $H_{W}^{\ast }$ are
automorphically equivalent. We proved in \cite[Proposition 5.1]{Tsurkov}
that the algebras $H$ and $H_{W}^{\ast }$ are not geometrically equivalent.

From this proof it is clear that this example is valid for the variety of
the all power associative algebras.

\begin{example}
\label{s_4}
\end{example}

In this example we denote by $\Theta $ the variety of the all alternative
algebras. We will consider a strongly stable automorphism $\Phi $ of the
category $\Theta ^{0}$ which corresponds to the system of words%
\begin{equation*}
W=\left\{ w_{0}=0,w_{\lambda }\left( x\right) =\lambda x\left( \lambda \in
k\right) ,w_{+}\left( x_{1},x_{2}\right) =x_{1}+x_{2},w_{\cdot }\left(
x_{1},x_{2}\right) =x_{2}x_{1}\right\} .
\end{equation*}%
We will consider the free algebra of $\Theta $ with two free generators: $%
F=F\left( x_{1},x_{2}\right) $. This is an associative algebra. In this
algebra we will consider the verbal ideal $I$ generated by identity $%
x_{1}x_{2}^{2}=0$. We will denote $H=F/I$. As above $H$ and $H_{W}^{\ast }$
are automorphically equivalent. We will prove the

\begin{proposition}
$H$ and $H_{W}^{\ast }$ are not geometrically equivalent.
\end{proposition}

\begin{proof}
In this proof we use the method of \cite[Proposition 5.1]{Tsurkov}. The
ideal $I=\left\langle \alpha \left( x_{1}x_{2}^{2}\right) \mid \alpha \in 
\mathrm{End}F\right\rangle $ will be the smallest $H$-closed set in $F$,
because $I=\left( 0\right) _{H}^{\prime \prime }$, where $0\in F$. If
algebras $H$ and $H_{W}^{\ast }$ are geometrically equivalent then the
structures of the $H$-closed sets and of the $H_{W}^{\ast }$-closed sets in $%
F$ coincide. Hence $I$ must be the smallest $H_{W}^{\ast }$-closed set in $F$%
.

By \cite[Remark 5.1]{TsurkovAutomEquiv} 
\begin{equation}
T\rightarrow s_{F}T  \label{closed_bijection}
\end{equation}%
is a bijection from the structure of the $H_{W}^{\ast }$-closed sets in $F$
to the structure of the $H$-closed sets in $F$. It is clear that the
bijection (\ref{closed_bijection}) preserves inclusions of sets. So it
transforms the smallest $H_{W}^{\ast }$-closed set to the smallest $H$%
-closed set, hence $I=s_{F}I$ must fulfills.

We will get more information about the subspace $\left( I+F^{4}\right)
/F^{4} $. If as above $\alpha \in \mathrm{End}F$ such that $\alpha \left(
x_{i}\right) =\alpha _{1i}x_{1}+\alpha _{2i}x_{2}+f_{i}$, where $i=1,2$, $%
f_{i}\in F^{2}$, $\alpha _{ji}\in k$, then%
\begin{equation*}
\alpha \left( x_{1}x_{2}^{2}\right) \equiv \alpha _{11}\alpha
_{12}^{2}x_{1}^{3}+\alpha _{11}\alpha _{12}\alpha _{22}\left(
x_{1}^{2}x_{2}+x_{1}x_{2}x_{1}\right) +\alpha _{11}\alpha
_{22}^{2}x_{1}x_{2}^{2}+
\end{equation*}%
\begin{equation*}
\alpha _{12}^{2}\alpha _{21}x_{2}x_{1}^{2}+\alpha _{12}\alpha _{21}\alpha
_{22}\left( x_{2}x_{1}x_{2}+x_{2}^{2}x_{1}\right) +\alpha _{21}\alpha
_{22}^{2}x_{2}^{3}\left( \func{mod}F^{4}\right) .
\end{equation*}%
So $\left( I+F^{4}\right) /F^{4}\subseteq \mathrm{sp}_{k}\left\{ \overline{%
x_{1}^{3}},\overline{x_{1}^{2}x_{2}}+\overline{x_{1}x_{2}x_{1}},\overline{%
x_{1}x_{2}^{2}},\overline{x_{2}x_{1}^{2}},\overline{x_{2}x_{1}x_{2}}+%
\overline{x_{2}^{2}x_{1}},\overline{x_{2}^{3}}\right\} =V$, where $\overline{%
x_{1}^{3}}=x_{1}^{3}+F^{4}$ and so on. $s_{F}\left( x_{1}x_{2}^{2}\right)
=x_{2}^{2}x_{1}\in s_{F}I$, but $\overline{x_{2}^{2}x_{1}}\notin V$ and $%
\left( s_{F}I+F^{4}\right) /F^{4}\neq \left( I+F^{4}\right) /F^{4}$, so $%
I\neq s_{F}I$. This contradiction finishes the proof.
\end{proof}

\section{Acknowledgements.}

I am grateful to J. Semionova (Technological Qualification Institute,
Israel) and E. Shtranvasser (Open University, Israel) who developed the
program Uroborus 1, which was used in the finding of Example \ref{aut_6}.

I am thankful to Prof. I. P. Shestakov who was very heedful to my research.

I acknowledge the support by FAPESP - Funda\c{c}\~{a}o de Amparo \`{a}
Pesquisa do Estado de S\~{a}o Paulo (Foundation for Support Research of the
State S\~{a}o Paulo), project No. 2010/50948-2.


\begin{thebibliography}{99}
\bibitem{Lewin} J. Lewin, On Schreier varieties of linear algebras. \textit{%
Trans. Amer. Math. Soc.} \textbf{132} (1968), pp. 553--562.

\bibitem{PlotkinVarCat} B. Plotkin, Varieties of algebras and algebraic
varieties. Categories of algebraic varieties. \textit{Siberian Advanced
Mathematics, Allerton Press,} \textbf{7:2}, (1997), pp. 64 -- 97.

\bibitem{PlotkinNotions} B. Plotkin, Some notions of algebraic geometry in
universal algebra, \textit{Algebra and Analysis,} \textbf{9:4} (1997), pp.
224 -- 248, \textit{St. Petersburg Math. J.,} \textbf{9:4}, (1998), pp. 859
-- 879.

\bibitem{PlotkinSame} B. Plotkin, Algebras with the same (algebraic)
geometry, \textit{Proceedings of the International Conference on
Mathematical Logic, Algebra and Set Theory, dedicated to 100 anniversary of
P.S. Novikov, Proceedings of the Steklov Institute of Mathematics, MIAN,} {\ 
}\textbf{242,} (2003), pp. 17 -- 207.

\bibitem{PlotkinZhitAutCat} B. Plotkin, G. Zhitomirski, On automorphisms of
categories of free algebras of some varieties, \textit{Journal of Algebra,} 
\textbf{306:2}, (2006), pp. 344 -- 367.

\bibitem{ShirshovComm} A. I. Shirshov, Subalgebras of the free commutative
and free anticommutative algebras. \textit{Matematicheskij Sbornik,} \textbf{%
34(76):1}, (1954), pp. 81-88. (In Russian.)

\bibitem{TsurkovAutomEquiv} A. Tsurkov, Automorphic equivalence of algebras. 
\textit{International Journal of Algebra and Computation.} \textbf{17:5/6},
(2007), pp. 1263--1271.

\bibitem{Tsurkov} A. Tsurkov, Automorphic equivalence of linear algebras,%
\textit{\ }http://arxiv.org/abs/1106.4853. Accepted in the \textit{Journal
of Algebra and Its Applications.}

\bibitem{TsurkovManySortes} A. Tsurkov, Automorphic equivalence of
many-sorted algebras. http://arxiv.org/abs/1304.0021.

\bibitem{ZhSlSheShiAlmost} K. A. Zhevlakov, A. M. Slin'ko, I. P. Shestakov,
A. I. Shirshov, Almost associative rings. \textbf{Moscow, "Nauka", 1978.}
(In Russian.)
\end{thebibliography}
\end{document}